\def\@themcountersep{}
\newtheorem{theorem}{Theorem}[section]
\newtheorem{corollary}[theorem]{Corollary}
\newtheorem{definition}[theorem]{Definition}
\newtheorem{lemma}[theorem]{Lemma}
\newtheorem{proposition}[theorem]{Proposition}
\newtheorem{remark}[theorem]{Remark}
\def\ba{\begin{array}}
\def\ea{\end{array}}
\def\beq{\begin{equation}}
\def\eeq{\end{equation}}
\def\beann{\begin{eqnarray*} }
\def\eeann{\end{eqnarray*}}
\def\bc{\begin{center}}
\def\ec{\end{center}}
\def\bea{\begin{eqnarray}}
\def\eea{\end{eqnarray}}
\begin{document}


\title{Polyhedral approximations of the semidefinite cone and their application\thanks{
An earlier version of this paper was entitled ``Polyhedral approximations of the semidefinite cone and their applications.'' 
This research was supported by the Japan Society for the Promotion of Science through a Grant-in-Aid for Challenging Exploratory Research (17K18946) and a Grant-in-Aid for Scientific Research ((B)19H02373) of the Ministry of Education, Culture, Sports, Science and Technology of Japan.}
}

\author{Yuzhu Wang\thanks{
Graduate School of Systems and Information Engineering, University of Tsukuba, Tsukuba, Ibaraki 305-8573, Japan. email: s1930138@s.tsukuba.ac.jp 
}, 
Akihiro Tanaka\thanks{
Central Research Institute of Electric Power Industry, Yokosuka, Kanagawa 240-0196, Japan. 
email: a-tanaka@criepi.denken.or.jp
}   
 and 
Akiko Yoshise\thanks{Corresponding author. Faculty of Engineering, Information and Systems, University of Tsukuba, Tsukuba, Ibaraki 305-8573, Japan. email: yoshise@sk.tsukuba.ac.jp
}     
}

\date{May 2019 \\ Revised December 2020}

\maketitle

\begin{abstract}
We develop techniques to construct a series of sparse polyhedral approximations of the semidefinite cone. Motivated by the semidefinite (SD) bases proposed by Tanaka and Yoshise (2018), we propose a simple expansion of SD bases so as to keep the sparsity of the matrices composing it. We prove that the polyhedral approximation using our expanded SD bases contains the set of all diagonally dominant matrices and is contained in the set of all scaled diagonally dominant matrices. We also prove that the set of all scaled diagonally dominant matrices can be expressed using an infinite number of expanded SD bases. We use our approximations as the initial approximation in cutting plane methods for solving a semidefinite relaxation of the maximum stable set problem. It is found that the proposed methods with expanded SD bases are significantly more efficient than methods using other existing approximations or solving semidefinite relaxation problems directly. 
\end{abstract}

{\bf Key words:}
 Semidefinite optimization problems; Conic optimization problems; Polyhedral approximation; Semidefinite bases; Expanded semidefinite bases.

{\bf AMS subject classifications:}
  	90C05, 90C22, 90C25

\section{Introduction}

A semidefinite optimization problem (SDP) is an optimization problem in variables in the space of symmetric matrices with a linear objective function and linear constraints over the semidefinite cone. We denote the space of symmetric matrices as ${\mathbb S}^n:=\{X\in\mathbb{R}^{n\times n}\mid X_{i,j}=X_{j,i} \ (1 \leq i < j \leq n) \}$ and the semidefinite cone as ${\cal S}^n_+:=\{X\in\mathbb{S}^n\mid d^TXd\ge0 \ \mbox{for any} \ d\in\mathbb{R}^n \}$. Accordingly, we can readily define an SDP in the standard form, as 
\begin{align}\label{SDP}
\min&\ \langle C,X\rangle\nonumber\\
{\rm s.t.}&\ \langle A_j,X\rangle=b_j,j=1,2,\ldots,m,\\
&\ X\in{\cal S}^n_+,\nonumber
\end{align}
where $C\in\mathbb{S}^n$, $A_j\in\mathbb{S}^n$, $b_j\in\mathbb{R}$ ($j=1,2,\ldots,m$), and $\langle A,B\rangle:={\rm Trace}(A^TB)=\sum_{i,j=1}^nA_{i,j}B_{i,j}$ is the inner product over $\mathbb{S}^n$. 

SDPs are powerful tools that provide convex relaxations for combinatorial and nonconvex optimizations, such as the max-cut problem (e.g., \cite{Goemans:1995:IAA:227683.227684}, \cite{doi:10.1080/10556780108805818}) and the k-equipartition problem (e.g., \cite{wolkowicz2012handbook}, \cite{karisch1998semidefinite}). Some of these relaxations can even attain the optimum, as shown in \cite{Lasserre2001} and \cite{Kim2003}. Interested readers may find details about SDPs and their relaxations in \cite{wolkowicz2012handbook}, \cite{todd2001} and \cite{laurent2012semidefinite}. 

A cone ${\cal K} \subset \mathbb{S}^n$ is called proper if it has a non-empty interior and is closed, pointed (i.e., ${\cal K} \cap -{\cal K}=\{O\}$), and convex. It is known that the SDP cone is a proper cone \cite{blekherman2012semidefinite}. By replacing the semidefinite constraint $X\in{\cal S}^n_+$ with a general conic constraint $X\in{\cal K}$ in (\ref{SDP}) (say, a proper cone ${\cal K} \subset \mathbb{S}^n$), one can obtain a general class of problems, namely, conic optimization problems. The class of conic optimization problems has been an active field of study because it contains many popular classes of problems, including linear optimization problems (LPs), second-order cone programs (SOCPs), SDPs, and copositive programs. Copositive programs have been shown capable of providing tight lower bounds for combinatorial and quadratic optimization problems, as described in the survey paper by D{\"u}r \cite{dur2010copositive} and the recent work of Arima et al. \cite{arima2013quadratically}, \cite{kim2016lagrangian}, \cite{arima2017robust}, etc. It has been shown that a copositive relaxation sometimes gives a highly accurate approximate solution for some combinatorial problems under certain conditions \cite{arima2018lagrangian}, \cite{burer2009copositive}. However, the copositive program and its dual problem are both NP-hard (see, e.g., \cite{dickinson2014computational} and \cite{murty1987some}). 

SDPs are also attractive because they can be solved in polynomial time to any desired precision. There are state-of-the-art solvers, such as SDPA \cite{SDPA2003}, SeDuMi \cite{sturm1999using}, SDPT3  \cite{toh1999sdpt3}, and Mosek \cite{mosek}, but their computations become difficult when the size of the SDP becomes large. To overcome this deficiency, for example, one may use preprocessing to reduce the size of the SDPs, which leads to facial reduction methods \cite{permenter2014partial}, \cite{7040427} and \cite{Waki2013}. As another idea, one may generate relaxations of SDPs and solve them as easily handled optimization problems, e.g., LPs and SOCPs, which leads to cutting plane methods. We will focus on these latter methods. 

The cutting plane method solves an SDP by transforming it into an optimization problem (e.g., an LP or an SOCP), adding cutting planes at each iteration to cut the current approximate solution out of the feasible region in the next iterations and to get close to the optimal value. The cutting plane method was first used on the traveling-salesman problem, by Dantzig, Fulkerson, and Johnson \cite{dantzig1954solution}, \cite{dantzig1959linear} in 1954. It was used in 1958 by Gomory \cite{gomory1958outline} to solve integer linear programming problems. As SDPs became popular, it came to be used on them as well; see, for instance, Krishnan and Mitchell \cite{krishnan2002linear}, \cite{krishnan2006unifying} and \cite{Krishnan2006}, and Konno et al. \cite{konno2002cutting}. Kobayashi and Takano \cite{kobayashibranch} applied it to a class of mixed-integer SDPs. In \cite{ahmadi2017optimization}, Ahmadi, Dash, and Hall applied it to nonconvex polynomial optimization problems and copositive programs. 

In the above-mentioned cutting plane methods for SDPs, the semidefinite constraint $X\in{\cal S}^n_+$ in (\ref{SDP}) is first relaxed to $X\in{\cal K}_{\rm out}$, where ${\cal S}^n_+\subseteq{\cal K}_{\rm out}\subseteq{\mathbb S}^n$, and an initial relaxation of the SDP is obtained. If ${\cal K}_{\rm out}$ is polyhedral, the initial relaxation may give an LP; if ${\cal K}_{\rm out}$ is given by second-order constraints, the initial relaxation becomes an SOCP. To improve the performance of these cutting plane methods, we consider generating initial relaxations for SDPs that are both tight and computationally efficient and focus on approximations of ${\cal S}^n_+$.

Many approximations of ${\cal S}^n_+$ have been proposed on the basis of its well-known properties. Kobayashi and Takano \cite{kobayashibranch} used the fact that the diagonal elements of semidefinite matrices are nonnegative. Konno et al. \cite{konno2002cutting} imposed an assumption that all diagonal elements of the variable $X$ in the SDPs appearing in their iterative algorithm are bounded by a constant. The sets of diagonally dominant matrices and scaled diagonally dominant matrices are known to be cones contained in ${\cal S}^n_+$, (see, e.g., \cite{horn1990matrix} and \cite{ahmadi2017optimization} for details). The inclusive relation among them has been studied in, e.g., \cite{berman1994nonnegative} and \cite{bishan1998iterative}. Ahmadi et al. \cite{ahmadi2017optimization} and \cite{ahmadi2017dsos} used these sets as initial approximations of their cutting plane method. Boman et al. \cite{boman2005factor} defined the {\em factor width} of a semidefinite matrix, and Permenter and Parrilo used it to generate approximations of ${\cal S}^n_+$, which they applied to facial reduction methods in \cite{permenter2014partial}. 

Tanaka and Yoshise defined various bases of $\mathbb{S}^n$, wherein each basis consists of $\frac{n(n+1)}{2}$ semidefinite matrices, called semidefinite (SD) bases, and used them to devise approximations of ${\cal S}^n_+$ \cite{tanaka2018lp}. They showed that the conical hull of SD bases and its dual cone give inner and outer polyhedral approximations of ${\cal S}^n_+$, respectively. On the basis of the SD bases, they also developed techniques to determine whether a given matrix is in the semidefinite plus nonnegative cone ${\cal S}^n_++{\cal N}^n$, which is the Minkowski sum of ${\cal S}^n_+$ and the nonnegative matrices cone ${\cal N}^n$. In this paper, we focus on the fact that SD bases are sometimes sparse, i.e., the number of nonzero elements in a matrix is relatively small, and hence, it is not so computationally expensive to solve polyhedrally approximated problems in such SD bases. We call such an approximation, a {\em sparse polyhedral approximation}, and propose efficient sparse approximations of ${\cal S}^n_+$.

The goal of this paper is to construct tight and sparse polyhedral approximations of ${\cal S}^n_+$ by using SD bases in order to solve hard conic optimization problems, e.g., doubly nonnegative (DNN, or ${\cal S}^n_+ \cap \mathcal{N}^n$) and semidefinite plus nonnegative ($\mathcal{S}^n_+ + \mathcal{N}^n$) optimization problems. The contributions of this paper are summarized as follows.
\begin{itemize}
\item This paper gives the relation between the conical hull of sparse SD bases and the set of diagonally dominant matrices. We propose a simple expansion of SD bases without losing the sparsity of the matrices and prove that one can generate a sparse polyhedral approximation of ${\cal S}^n_+$ that contains the set of diagonally dominant matrices and is contained in the set of scaled diagonally dominant matrices. 
\item The expanded SD bases are used by cutting plane methods for a semidefinite relaxation of the maximum stable set problem. It is found that the proposed methods with expanded SD bases are significantly more efficient than methods using other approximations or solving semidefinite relaxation problems directly. 
\end{itemize}

The organization of this paper is as follows. Various approximations of ${\cal S}^n_+$ are introduced in section \ref{sec:1}, including those based on the factor width by Boman et al. \cite{boman2005factor}, diagonal dominance by Ahmadi et al. \cite{ahmadi2017optimization}, and SD bases by Tanaka and Yoshise \cite{tanaka2018lp}. The main results of this paper, i.e., an expansion of SD bases and an analysis of its theoretical properties, are provided in section \ref{sec:5}. In section \ref{sec:9}, we introduce the cutting plane method using different approximations of ${\cal S}^n_+$ for calculating upper bounds of the maximum stable set problem. We also describe the results of numerical experiments and evaluate the efficiency of the proposed method with expanded SD bases.

\section{Some approximations of the semidefinite cone}
\label{sec:1}

\subsection{Factor width approximation}
\label{sec:2}

In \cite{boman2005factor}, Boman et al. defined a concept called factor width.
\begin{definition}\label{facwid}
\emph{(Definition 1 in \cite{boman2005factor})}
The factor width of a real symmetric matrix $A\in\mathbb{S}^n$ is the smallest integer $k$ such that there exists a real matrix $V\in\mathbb{R}^{n\times m}$ where $A=VV^T$ and each column of $V$ contains at most $k$ nonzero elements.
\end{definition}

For $k\in\{1,2,\ldots,n\}$, we can also define
\begin{align*}
{\cal FW}(k):=\{X\in\mathbb{S}^n\mid \text{X has a factor width of at most }k\}.
\end{align*}

It is obvious that the factor width is only defined for semidefinite matrices, because for every matrix $A$ in Definition \ref{facwid}, the decomposition $A=VV^T$ implies that $A\in{\cal S}^n_+$. Therefore, for every $k\in\{1,2,\ldots,n\}$, the set of matrices with a factor width of at most $k$ gives an inner approximation of $\mathbb{S}^n_+$: ${\cal FW}(k)\subseteq\mathbb{S}^n_+.$

\subsection{Diagonal dominance approximation}
\label{sec:3}
In \cite{ahmadi2017optimization} and \cite{ahmadi2017dsos}, the authors approximated the cone ${\cal S}^n_+$ with the set of diagonally dominant matrices and the set of scaled diagonally dominant matrices.

\begin{definition}\label{dd}
The set of diagonally dominant matrices ${\cal DD}_n$ and the set of scaled diagonally dominant matrices ${\cal SDD}_n$ are defined as follows:
\begin{align*}
{\cal DD}_n&:=\{A\in\mathbb{S}^n\mid \ A_{i,i}\ge\sum_{j\neq i}|A_{i,j}|\hspace{3mm} (i=1,2,\ldots,n)\},\\
{\cal SDD}_n&:=\{A\in\mathbb{S}^n\mid DAD\in{\cal DD}_n \ \mbox{\em for some positive diagonal matrix $D$} \}.
\end{align*}
\end{definition}

It is easy to see that ${\cal DD}_n$ is a convex cone and ${\cal SDD}_n$ is a cone in $\mathbb{S}^n$. As a consequence of the Gershgorin circle theorem \cite{gershger}, we have the relation ${\cal DD}_n\subseteq{\cal SDD}_n\subseteq{\cal S}^n_+$. Ahmadi et al. \cite{ahmadi2017optimization} defined ${\cal U}_{n,k}$ as the set of vectors in $\mathbb{R}^n$ with at most $k$ nonzeros, each equal to $1$ or $-1$. They also defined a set of matrices $U_{n,k}:=\{uu^T\mid u\in{\cal U}_{n,k}\}$. Barker and Carlson \cite{barker1975cones} proved the following theorem.
\begin{theorem}\label{unk}
\emph{(Barker and Carlson \cite{barker1975cones})} ${\cal DD}_n={\rm cone}(U_{n,2}).$
\end{theorem}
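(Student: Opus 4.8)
The plan is to prove the two inclusions ${\rm cone}(U_{n,2}) \subseteq {\cal DD}_n$ and ${\cal DD}_n \subseteq {\rm cone}(U_{n,2})$ separately, where $U_{n,2} = \{uu^T \mid u \in {\cal U}_{n,2}\}$ consists of rank-one matrices built from vectors with at most two nonzero entries, each equal to $\pm 1$.

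For the forward inclusion ${\rm cone}(U_{n,2}) \subseteq {\cal DD}_n$, I would first check that every generator $uu^T$ with $u \in {\cal U}_{n,2}$ lies in ${\cal DD}_n$. There are two cases: if $u$ has a single nonzero (say $u = \pm e_i$), then $uu^T = e_ie_i^T$ is the diagonal matrix with a single $1$ in position $(i,i)$, which is trivially diagonally dominant. If $u$ has exactly two nonzeros in positions $i$ and $j$ with values $\pm 1$, then $uu^T$ has entries $1$ at $(i,i)$ and $(j,j)$ and $\pm 1$ at the off-diagonal positions $(i,j)$ and $(j,i)$; in each row the diagonal entry equals $1$ and the sum of absolute off-diagonal entries also equals $1$, so the diagonal dominance inequality holds with equality. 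Since ${\cal DD}_n$ is a convex cone (as noted in the text), it is closed under nonnegative combinations, so any element of ${\rm cone}(U_{n,2})$ stays in ${\cal DD}_n$.

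The reverse inclusion ${\cal DD}_n \subseteq {\rm cone}(U_{n,2})$ is the substantive direction and where the main work lies. Given an arbitrary $A \in {\cal DD}_n$, the goal is to write $A$ explicitly as a nonnegative combination of the generators $e_ie_i^T$ and $(e_i \pm e_j)(e_i \pm e_j)^T$. The natural strategy is to peel off each off-diagonal entry using the rank-one pieces: note that for the pair $(i,j)$,
\[
(e_i + e_j)(e_i + e_j)^T = e_ie_i^T + e_je_j^T + (e_ie_j^T + e_je_i^T),
\]
\[
(e_i - e_j)(e_i - e_j)^T = e_ie_i^T + e_je_j^T - (e_ie_j^T + e_je_i^T),
\]
so a coefficient $|A_{i,j}|$ on the generator whose sign matches $\operatorname{sign}(A_{i,j})$ reproduces the off-diagonal entry $A_{i,j}$ (and $A_{j,i}$) exactly while contributing $|A_{i,j}|$ to each of the diagonal positions $(i,i)$ and $(j,j)$. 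After summing these contributions over all pairs $i<j$, the off-diagonal part of $A$ is matched exactly, and position $(i,i)$ has so far accumulated $\sum_{j \neq i} |A_{i,j}|$. The diagonal dominance hypothesis guarantees the residual $A_{i,i} - \sum_{j \neq i}|A_{i,j}| \geq 0$, so I can absorb this nonnegative leftover by adding that multiple of the pure diagonal generator $e_ie_i^T$. This exhibits $A$ as a nonnegative combination of elements of $U_{n,2}$, completing the inclusion.

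The main obstacle is purely bookkeeping: I must verify that the single explicit nonnegative combination simultaneously reproduces every off-diagonal entry (with the correct sign handled by the $\pm$ choice) and leaves exactly the residual $A_{i,i} - \sum_{j\neq i}|A_{i,j}|$ on each diagonal, and that this residual is nonnegative precisely by the definition of ${\cal DD}_n$. No deep argument is needed beyond confirming that all coefficients, namely the $|A_{i,j}|$ and the diagonal residuals, are nonnegative; the construction is constructive and elementary, so the proof reduces to displaying the decomposition and checking it entrywise.
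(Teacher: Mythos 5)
Your proof is correct and complete: both inclusions are verified, the explicit decomposition
\[
A=\sum_{i<j}|A_{i,j}|\,\bigl(e_i+\operatorname{sign}(A_{i,j})e_j\bigr)\bigl(e_i+\operatorname{sign}(A_{i,j})e_j\bigr)^T+\sum_{i}\Bigl(A_{i,i}-\sum_{j\neq i}|A_{i,j}|\Bigr)e_ie_i^T
\]
reproduces $A$ entrywise, and the nonnegativity of every coefficient is exactly the diagonal dominance condition. The paper itself gives no proof of this statement --- it is quoted as a known result of Barker and Carlson --- so there is nothing internal to compare against; your argument is the standard self-contained derivation and would serve as a valid substitute for the citation.
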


The conical hull of a given set ${\cal K}\subseteq\mathbb{S}^n$ is defined as ${\rm cone}({\cal K}):=\{\sum_{i=1}^k \alpha_iX_i\mid X_i\in {\cal K},\alpha_i\ge0,k\in{\mathbb Z}_{\ge0}\}$, where $\mathbb{Z}_{\ge0}$ is the set of nonnegative integers. A cone generated in this way by a finite number of elements is called {\it finitely generated}. Theorem \ref{unk} implies that ${\cal DD}_n$ has $n^2$ extreme rays; thus, it is a finitely generated cone. 

A cone ${\cal K}\in\mathbb{S}^n$ is {\it polyhedral} if ${\cal K}=\{X\in\mathbb{S}^n\mid\langle A_i,X\rangle\leq 0\}$ for some $A_i\in\mathbb{S}^n$. The following theorem follows from the results of Minkowski \cite{minkowski1896} and Weyl \cite{Weyl1935}.
\begin{theorem}\label{minweyl}
\emph{(Minkowski-Weyl theorem, see Corollary 7.1a in \cite{schrijver1998theory})} A convex cone is polyhedral if and only if it is finitely generated. 
\end{theorem}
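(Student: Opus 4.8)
The plan is to prove both implications separately and to reduce everything to Euclidean space at the outset. Since $\mathbb{S}^n$ is a finite-dimensional real inner product space (isomorphic to $\mathbb{R}^{n(n+1)/2}$ via the map carrying a symmetric matrix to its upper-triangular entries, with $\langle\cdot,\cdot\rangle$ transported accordingly), it suffices to argue for cones in $\mathbb{R}^N$, and I would phrase the argument so that it applies verbatim in $\mathbb{S}^n$. The computational engine throughout is Fourier--Motzkin elimination: given finitely many linear inequalities in variables $(X,\lambda)$, eliminating $\lambda$ yields finitely many linear inequalities in $X$ alone whose solution set is exactly the projection of the original set onto $X$-space. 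I would either cite this or prove it by the standard one-variable-at-a-time pairing of positively and negatively signed inequalities.

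First I would treat the direction \emph{finitely generated $\Rightarrow$ polyhedral} (Weyl). Writing $\mathcal{K} = {\rm cone}(V_1,\dots,V_k)$, I observe that
\[
\mathcal{K} = \Big\{\, X \;\Big|\; \exists\, \lambda \ge 0,\ X = \textstyle\sum_{i=1}^k \lambda_i V_i \,\Big\},
\]
which is the projection onto the $X$-variables of the polyhedron $\{(X,\lambda)\mid X - \sum_i \lambda_i V_i = O,\ \lambda \ge 0\}$ in the product space. By Fourier--Motzkin elimination this projection is the solution set of finitely many inequalities $\langle A_i, X\rangle \le 0$, so $\mathcal{K}$ is polyhedral.

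For the converse I would set up a small amount of duality. Define the dual cone $\mathcal{K}^* := \{Y \mid \langle Y, X\rangle \le 0 \ \text{for all}\ X\in\mathcal{K}\}$, and record two facts: a finitely generated cone is closed (Carath\'eodory plus a compactness argument), and for a closed convex cone the bipolar theorem $(\mathcal{K}^*)^* = \mathcal{K}$ holds, which I would derive from the separating hyperplane theorem. Now for \emph{polyhedral $\Rightarrow$ finitely generated} (Minkowski): given $\mathcal{K} = \{X \mid \langle A_i, X\rangle \le 0,\ i=1,\dots,m\}$, set $\mathcal{D} := {\rm cone}(A_1,\dots,A_m)$. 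A direct computation, using that $Y\in\mathcal{D}$ means $Y=\sum_i \mu_i A_i$ with $\mu_i\ge 0$, gives $\mathcal{K} = \mathcal{D}^*$. By the Weyl direction already proved, $\mathcal{D}$ is polyhedral, say $\mathcal{D} = \{Y \mid \langle B_j, Y\rangle \le 0,\ j=1,\dots,p\}$; the same computation shows $\mathcal{D} = \mathcal{E}^*$ with $\mathcal{E} := {\rm cone}(B_1,\dots,B_p)$. Since $\mathcal{E}$ is finitely generated it is closed, so the bipolar theorem yields $\mathcal{K} = \mathcal{D}^* = (\mathcal{E}^*)^* = \mathcal{E} = {\rm cone}(B_1,\dots,B_p)$, which is finitely generated.

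The hard part will be the duality machinery rather than either polyhedral manipulation: specifically, establishing that a finitely generated cone is closed (so the separating hyperplane argument is available) and proving $(\mathcal{K}^*)^* = \mathcal{K}$ for closed convex cones. By contrast, the Fourier--Motzkin step is essentially mechanical, and the remaining care is simply in keeping the ``$\le 0$'' sign convention consistent across the polar operation so that the identities $\mathcal{K}=\mathcal{D}^*$ and $\mathcal{D}=\mathcal{E}^*$ come out correctly.
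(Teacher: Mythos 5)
The paper does not prove this statement; it is quoted as a classical result with a pointer to Corollary~7.1a of Schrijver \cite{schrijver1998theory}, so there is no in-paper argument to compare yours against. Your proposal is a correct and standard proof, and in fact it follows essentially the same route as the cited source: Fourier--Motzkin elimination for the Weyl direction (finitely generated $\Rightarrow$ polyhedral), and then the polarity trick $\mathcal{K}=\mathcal{D}^*$, $\mathcal{D}=\mathcal{E}^*$, $\mathcal{K}=(\mathcal{E}^*)^*=\mathcal{E}$ for the Minkowski direction. The sign conventions check out (with $\mathcal{K}=\{X\mid\langle A_i,X\rangle\le 0\}$ one indeed gets $\mathcal{K}=\mathrm{cone}(A_1,\dots,A_m)^*$ under the $\le 0$ polar), and since the intermediate system fed to Fourier--Motzkin is homogeneous, the eliminated system is homogeneous as well, matching the paper's definition of a polyhedral cone. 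You correctly identify the two lemmas that carry the real weight --- closedness of finitely generated cones (Carath\'eodory plus the observation that a cone over a linearly independent set is closed, and a finite union of such cones is closed) and the bipolar theorem for closed convex cones via separation --- and a complete write-up would need to supply those; as a proof sketch the proposal is sound.
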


The above theorem ensures that ${\cal DD}_n$ is a polyhedral cone. Using the expression in Theorem \ref{unk}, Ahmadi et al. proved that optimization problems over ${\cal DD}_n$ can be solved as LPs. They also proved that optimization problems over ${\cal SDD}_n$ can be solved as SOCPs. They designed a column generation method using ${\cal DD}_n$ and ${\cal SDD}_n$ to obtain a series of inner approximations of ${\cal S}_n^+$. As for the relation between the factor width and diagonal dominance, useful results were presented in \cite{boman2005factor} and in \cite{ahmadi2017dsos}, which gives a relation between ${\cal SDD}_n$ and the set of matrices with a factor width of at most $2$.

\begin{lemma}\label{factorSDDn}
\emph{(See \cite{boman2005factor} and Theorem 8 in \cite{ahmadi2017dsos})} ${\cal FW}(2)={\cal SDD}_n$
\end{lemma}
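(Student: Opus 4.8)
The plan is to prove the two inclusions ${\cal SDD}_n\subseteq{\cal FW}(2)$ and ${\cal FW}(2)\subseteq{\cal SDD}_n$ separately, using as a common device the elementary fact that a matrix supported on a single $2\times 2$ principal submatrix indexed by $\{k,l\}$ is positive semidefinite if and only if its two diagonal entries are nonnegative and their product dominates the square of the off-diagonal entry. I will repeatedly pass between such $2\times 2$ PSD blocks and factorizations $A=VV^T$ with at most two nonzeros per column, since a rank-one matrix $vv^T$ with $v$ supported on $\{k,l\}$ is precisely a column of $V$ having at most two nonzeros, and the spectral decomposition of a $2\times 2$ PSD block produces at most two such columns.

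First I would prove ${\cal SDD}_n\subseteq{\cal FW}(2)$. Given $A\in{\cal SDD}_n$, choose a positive diagonal $D$ with $B:=DAD\in{\cal DD}_n$. Every diagonally dominant $B$ is a sum of $2\times 2$ PSD blocks: for each pair $k<l$ route the off-diagonal $B_{kl}$ into a block on $\{k,l\}$ and place $|B_{kl}|$ in both of that block's diagonal slots (so its determinant is $B_{kl}^2-B_{kl}^2=0$), then absorb the remaining diagonal slack $B_{kk}-\sum_{l\ne k}|B_{kl}|\ge 0$, which is nonnegative by diagonal dominance, into purely diagonal PSD blocks. Factoring each block into at most two columns yields $B=WW^T$ with at most two nonzeros per column, i.e. $B\in{\cal FW}(2)$. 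Finally $A=D^{-1}WW^TD^{-1}=(D^{-1}W)(D^{-1}W)^T$, and scaling the rows of $W$ by $D^{-1}$ leaves the support pattern of each column unchanged, so $A\in{\cal FW}(2)$.

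The reverse inclusion ${\cal FW}(2)\subseteq{\cal SDD}_n$ is the delicate part, and it is where I expect the main obstacle. The naive idea of scaling each block to become diagonally dominant fails, because a coordinate shared by several blocks receives conflicting demands on its scaling factor; one can exhibit an $A$ that is already diagonally dominant yet admits no such blockwise scaling. Instead I would argue through the comparison matrix $C$ with $C_{ii}=A_{ii}$ and $C_{ij}=-|A_{ij}|$ for $i\ne j$. Using the reformulation $DAD\in{\cal DD}_n\iff d_iA_{ii}\ge\sum_{j\ne i}d_j|A_{ij}|$ with $d=\diag(D)$, membership $A\in{\cal SDD}_n$ is equivalent to the existence of $d>0$ with $Cd\ge 0$. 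Writing $A=VV^T=\sum_e w_ew_e^T$ with each $w_e$ supported on an edge $e=\{i,j\}$, a Cauchy--Schwarz estimate over the columns sharing a given support gives $A_{ij}^2\le P_{ij}P_{ji}$, where $P_{ij}$ collects the contribution of those columns to the diagonal entry $(i,i)$. Hence $C$ is itself a sum of $2\times 2$ PSD blocks and is therefore positive semidefinite. Being a symmetric positive semidefinite $Z$-matrix, $C$ is a (possibly singular) symmetric $M$-matrix, and such matrices admit a strictly positive $d$ with $Cd\ge 0$ (apply Perron--Frobenius to $sI-C$ on each connected component; see \cite{horn1990matrix}, \cite{berman1994nonnegative}). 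This $d$ is exactly the scaling certifying $A\in{\cal SDD}_n$.

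The crux is thus the second direction, and within it the two technical points to pin down are: (i) that the comparison matrix $C$ is positive semidefinite, which I reduce to nonnegativity of the $2\times 2$ block determinants via Cauchy--Schwarz; and (ii) the extraction of a strictly positive vector $d$ with $Cd\ge 0$ from the singular $M$-matrix $C$, handled component by component so that $d$ is positive in every coordinate. Everything else is the routine bookkeeping of grouping columns by their support and converting between block decompositions and factorizations.
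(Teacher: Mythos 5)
Your proof is correct. Note first that the paper does not actually prove this lemma --- it is stated as a citation to \cite{boman2005factor} and to Theorem 8 of \cite{ahmadi2017dsos} --- so there is no in-paper argument to compare against; what you have written is essentially the standard proof from those references, reconstructed correctly. The easy inclusion ${\cal SDD}_n\subseteq{\cal FW}(2)$ via the explicit decomposition of a diagonally dominant matrix into $2\times 2$ PSD blocks (off-diagonal blocks with determinant zero plus nonnegative diagonal slack) and the observation that conjugation by $D^{-1}$ preserves column supports is exactly right. For the converse, your three steps all check out: (i) grouping the columns of $V$ by their (at most two-element) supports and applying Cauchy--Schwarz gives $A_{ij}^2\le P_{ij}P_{ji}$, and since $A_{ii}=\sum_{j\ne i}P_{ij}+s_i$ with $s_i\ge 0$, the comparison matrix $C$ decomposes into $2\times 2$ PSD blocks and is therefore PSD; (ii) the equivalence $A\in{\cal SDD}_n\iff\exists d>0,\ Cd\ge 0$ follows by dividing the diagonal-dominance inequality for $DAD$ by $d_i>0$; (iii) writing $C=sI-N$ with $N\ge 0$ and applying Perron--Frobenius componentwise to the connected components of the off-diagonal support graph produces a strictly positive $d$ with $Cd=(s-\rho')d\ge 0$ on each component, the sign of $s-\rho'$ being guaranteed because each diagonal block of $C$ is PSD. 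This is precisely the symmetric-H-matrix characterization used in \cite{boman2005factor} (see also \cite{berman1994nonnegative}), and you correctly identify the only genuinely delicate point: a naive blockwise rescaling cannot work, and the positive scaling vector must be extracted globally from the M-matrix structure of $C$.
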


Note that Definition \ref{facwid} implies that the set ${\cal FW}(k)$ is convex for any $k \in \{1,2,\ldots,n\}$, and we obtain the following corollary as Lemma \ref{factorSDDn}:

\begin{corollary}\label{convexSDDn}
The set ${\cal SDD}_n$ is a convex cone.
\end{corollary}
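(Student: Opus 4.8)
The plan is to deduce the convexity of ${\cal SDD}_n$ directly from the identity ${\cal SDD}_n = {\cal FW}(2)$ supplied by Lemma \ref{factorSDDn}, thereby reducing the claim to the convexity of the factor-width set ${\cal FW}(2)$, which is precisely the property asserted in the remark immediately preceding the corollary. Since it has already been observed earlier in the text that ${\cal SDD}_n$ is a cone, it remains only to establish convexity in order to conclude that it is a convex cone.

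First I would verify the convexity of ${\cal FW}(k)$ for an arbitrary $k$ using the factorization in Definition \ref{facwid}. Take $X, Y \in {\cal FW}(k)$ and write $X = VV^T$ and $Y = WW^T$, where every column of $V$ and of $W$ contains at most $k$ nonzero entries. For $\lambda \in [0,1]$, form the concatenated matrix $U = [\,\sqrt{\lambda}\,V, \ \sqrt{1-\lambda}\,W\,]$. A direct computation gives $UU^T = \lambda VV^T + (1-\lambda)WW^T = \lambda X + (1-\lambda)Y$, and each column of $U$ is a scalar multiple of a column of either $V$ or $W$, so it still has at most $k$ nonzeros. Hence $\lambda X + (1-\lambda)Y$ admits a factorization witnessing factor width at most $k$, i.e.\ $\lambda X + (1-\lambda)Y \in {\cal FW}(k)$, which proves convexity.

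Specializing to $k = 2$ and invoking Lemma \ref{factorSDDn} then yields that ${\cal SDD}_n = {\cal FW}(2)$ is convex. Combined with the fact that ${\cal SDD}_n$ is a cone, this establishes the corollary.

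The argument is essentially routine, and no serious obstacle is anticipated; the only point demanding care is confirming that the column-sparsity bound survives both the scaling by $\sqrt{\lambda}$ and $\sqrt{1-\lambda}$ and the concatenation of $V$ and $W$. Scaling a column does not alter its support, and concatenation merely juxtaposes existing columns, so no column of $U$ ever acquires more than $k$ nonzeros. This is the single place where the sparse structure in Definition \ref{facwid} is used, and it is exactly what makes the factor-width formulation more convenient than working with the scaling-matrix definition of ${\cal SDD}_n$ directly.
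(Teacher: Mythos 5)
Your argument is exactly the paper's: the paper derives the corollary by noting that Definition \ref{facwid} makes ${\cal FW}(k)$ convex and then invoking ${\cal FW}(2)={\cal SDD}_n$ from Lemma \ref{factorSDDn}. You have merely filled in the (correct) concatenation-of-factorizations detail that the paper leaves implicit, so the proposal is correct and essentially identical in approach.
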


\subsection{SD basis approximation}
\label{sec:4}

Tanaka and Yoshise defined semidefinite (SD) bases \cite{tanaka2018lp}.
\begin{definition}\label{df:sdbasis}
\emph{(Definitions 1 and 2 in \cite{tanaka2018lp})}
Let $e_i\in\mathbb{R}^n$ denotes the vector with a $1$ at the $i$th coordinate and $0$ elsewhere, and let $I=(e_1,\ldots,e_n)\in\mathbb{S}^n$ be the identity matrix. Then
\begin{align*}
{\cal B_+}:=\{(e_i+e_j)(e_i+e_j)^T\mid 1\leq i\leq j\leq n\}
\end{align*}
is called an SD basis of Type I, and 
\begin{align*}
{\cal B_-}:=\{(e_i+e_i)(e_i+e_i)^T\mid 1\leq i\leq n\}\cup\{(e_i-e_j)(e_i-e_j)^T\mid 1\leq i<j\leq n\}
\end{align*}
is called an SD basis of Type II. Matrices in SD bases Type I and II are defined as
\begin{align*}
{B}^+_{i,j:}=(e_i+e_j)(e_i+e_j)^T,\ {B}^-_{i,j}:=(e_i-e_j)(e_i-e_j)^T.
\end{align*}
\end{definition}

As shown in \cite{tanaka2018lp}, ${\cal B}_+$ and ${\cal B}_-$ are subsets of $\mathcal{S}^n_+$ and bases of $\mathbb{S}^n$. Given a set ${\cal K}\subseteq\mathbb{S}^n$, we define the dual cone of ${\cal K}$ as $({\cal K})^*:=\{A\in\mathbb{S}^n\mid \langle A,B\rangle\ge0 \ \mbox{for any} \ B\in{\cal K}\}$. The conical hull of ${\cal B}_+ \cup {\cal B}_-$ and its dual give an inner and an outer polyhedral approximation of $\mathcal{S}^n_+$, as follows.
\begin{definition}\label{Sin}
Let $I=(e_1,\ldots,e_n)\in\mathbb{S}^n$ be the identity matrix. The inner and outer approximations of ${\cal S}^n_+$ by using SD bases are defined as 
\begin{align*}
{\cal S}_{\rm in}:={\rm cone}({\cal B}_+\cup{\cal B}_-),\ \ {\cal S}_{\rm out}:=({\cal S}_{\rm in})^*.
\end{align*}
\end{definition}

By Definition \ref{df:sdbasis}, we know that ${\cal B}_+,{\cal B}_-\subseteq{\cal S}^n_+$. Since ${\cal S}^n_+$ is a convex cone, we have ${\cal S}_{\rm in}\subseteq{\rm cone}({\cal S}^n_+)={\cal S}^n_+$. By Lemma 1.7.3 in \cite{laurent2012semidefinite}, we know that ${\cal S}^n_+$ is self-dual; that is, ${\cal S}^n_+=({\cal S}^n_+)^*$. Accordingly, we can conclude that ${\cal S}_{\rm in}\subseteq{\cal S}^n_+\subseteq{\cal S}_{\rm out}$.

\begin{remark} \label{PtoI}
In \cite{tanaka2018lp}, ${\cal B}_+$ and ${\cal B}_-$ are defined as  ${\cal B}_+(P)$ and ${\cal B}_-(P)$ using an orthogonal matrix $P$ instead of the identity matrix $I$.
In fact, for any orthogonal matrix $P$, 
\[
P{\cal B_+}P^T :=\{ PB_{i,j}^+P^T \mid B_{i,j}^+ \in {\cal B_+} \} \ \mbox{and} \ 
P{\cal B_-}P^T :=\{ PB_{i,j}^-P^T \mid B_{i,j}^- \in {\cal B_-} \} 
\]
also give other bases and generalizations of  ${\cal B_+}$ and ${\cal B_-}$.  
However, as we will see in section \ref{sec:9}, we use the matrices in the bases as in optimization problems of the form
\[
\min \ \langle C,X\rangle\nonumber 
\ \ {\rm s.t.} \ \langle A,X\rangle=b, \  \langle Y ,X\rangle\ge0 \ (Y \in {\cal B_+}),
\]
which is equivalent to
\begin{equation} \label{P}
\min \ \langle PCP^T,\bar{X}\rangle
\ \ {\rm s.t.} \ \langle PAP^T, \bar{X}\rangle=b, \  \langle Y ,\bar{X}\rangle\ge0 \ (Y \in P{\cal B_+}P^T).
\end{equation}
Therefore, we consider that the generalizations  $P{\cal B_+}P^T$ and $P{\cal B_-}P^T$ are not essential throughout this paper and omit those descriptions from subsequent sections to simplify the presentation.
\end{remark}

\section{Expansion of SD bases}
\label{sec:5}

When we use the SD bases for approximating $\mathcal{S}^n_+$, the sparsity of the matrices in those bases is quite important in terms of computational efficiency. As we mentioned in Remark \ref{PtoI}, for any orthogonal matrix $P$, $P{\cal B_+}P^T$ and $P{\cal B_-}P^T$ give generalizations of the SD bases. However, it is hard to choose an appropriate orthogonal matrix $P$ (except for the identity matrix $I$) to keep the sparsity of the matrices $PCP^T$ and $PAP^T$ in (\ref{P}). In this section, we try to extend the definition of the SD bases in order to obtain various sparse SD bases which will lead us to sparse polyhedral approximations of $\mathcal{S}^n_+$.

\subsection{SD bases and their relations with ${\cal S}^{n}_+$ and ${\cal DD}_n$}
\label{sec:6}
First, we give a lemma that provides an expression of ${\cal S}^{n}_+$ by using SD bases.
The lemma is a direct corollary of the fact that any $X \in {\cal S}^{n}_+$ has nonnegative eigenvalues and a corresponding orthogonal basis of eigenvectors.

\begin{lemma}\label{scone}
\begin{align*}
{\cal S}^n_+={\rm cone}\left( \displaystyle\bigcup_{P\in{\cal O}^n}\{P^TXP\mid X\in{\cal B}_+\}\right)={\rm cone}\left( \displaystyle\bigcup_{P\in{\cal O}^n}\{P^TXP\mid X\in{\cal B}_-\} \right),
\end{align*}
where ${\cal O}^n$ is the set of orthogonal matrices in $\mathbb{R}^{n\times n}$.
\end{lemma}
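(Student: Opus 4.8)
The plan is to prove the two set inclusions separately, and to handle ${\cal B}_+$ and ${\cal B}_-$ at the same time by exploiting the one generator that the two bases share. First observe that every matrix in ${\cal B}_+\cup{\cal B}_-$ is of the form $vv^T$ for some $v\in\mathbb{R}^n$, hence lies in ${\cal S}^n_+$. For the inclusion ``$\supseteq$'' I would note that for any $P\in{\cal O}^n$ and any such generator, $P^T(vv^T)P=(P^Tv)(P^Tv)^T\in{\cal S}^n_+$; since ${\cal S}^n_+$ is a convex cone it is closed under finite nonnegative combinations, so the conical hull of either union is contained in ${\cal S}^n_+$.

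For the reverse inclusion ``$\subseteq$'', take an arbitrary $X\in{\cal S}^n_+$. By the spectral theorem there is an orthonormal basis of eigenvectors $q_1,\ldots,q_n$ and nonnegative eigenvalues $\lambda_1,\ldots,\lambda_n$ with $X=\sum_{k=1}^n\lambda_k q_kq_k^T$. The key point is that the diagonal generator $D:=(e_1+e_1)(e_1+e_1)^T=4\,e_1e_1^T$ belongs to both ${\cal B}_+$ (the case $i=j=1$) and ${\cal B}_-$. For each $k$ I would extend the unit vector $q_k$ to an orthonormal basis and collect the result into an orthogonal matrix $P_k\in{\cal O}^n$ with $P_k^Te_1=q_k$; then $P_k^TDP_k=4(P_k^Te_1)(P_k^Te_1)^T=4\,q_kq_k^T$. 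Consequently $\lambda_kq_kq_k^T=\frac{\lambda_k}{4}P_k^TDP_k$ with coefficient $\frac{\lambda_k}{4}\ge0$, and summing over $k$ exhibits $X$ as a finite nonnegative combination of orthogonal conjugates of $D$. Since $D$ lies in both bases, this shows $X$ belongs to both conical hulls, settling ``$\subseteq$'' for ${\cal B}_+$ and ${\cal B}_-$ simultaneously.

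The argument is almost entirely routine; the only nontrivial ingredient is the existence, for a prescribed unit vector $q_k$, of an orthogonal matrix $P_k$ realizing $P_k^Te_1=q_k$, which follows from the standard fact that any unit vector can be completed to an orthonormal basis (e.g. via Gram--Schmidt). I therefore do not expect a genuine obstacle here: the spectral decomposition supplies precisely the nonnegative coefficients $\lambda_k/4$, and the shared diagonal generator $D$ is what lets a single computation close both equalities. The only point needing mild care is the conjugation convention $P^TXP$ versus $PXP^T$, but this is immaterial since ${\cal O}^n$ is closed under transposition, so the two unions coincide.
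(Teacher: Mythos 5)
Your proof is correct and follows essentially the same route as the paper, which justifies the lemma by exactly this spectral-theorem argument (nonnegative eigenvalues plus an orthonormal eigenbasis, with each rank-one term realized as an orthogonal conjugate of a generator). Your observation that the diagonal generator $4e_1e_1^T$ lies in both ${\cal B}_+$ and ${\cal B}_-$, letting one computation settle both equalities, is a clean way to organize the details the paper leaves implicit.
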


Lemma \ref{scone} gives a way to approximate ${\cal S}^n_+$ by changing the matrix $P=(p_1,..,p_n)$ $\in{\cal O}^n$ when creating SD bases. However, a dense matrix $P\in{\cal O}^n$ may lead to a dense formulation of the approximation using SD basis, which is unattractive from the standpoint of computational efficiency. 

Note that we can easily see that the set ${\rm cone}({\cal B}_+\cup{\cal B}_-)$, the conical hull of the sparse SD bases $\mathcal{B}_+$ and $\mathcal{B}_-$, is equivalent to ${\rm cone}(U_{n,2})$. Thus, we obtain the following proposition as a corollary of Theorem \ref{unk}.

\begin{proposition}\label{pro:ddsd}
\begin{align*}
{\rm cone}({\cal B}_+\cup{\cal B}_-)={\cal DD}_n.
\end{align*}
\end{proposition}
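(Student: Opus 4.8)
The plan is to prove $\con({\cal B}_+\cup{\cal B}_-)={\cal DD}_n$ by showing that $\con({\cal B}_+\cup{\cal B}_-)=\con(U_{n,2})$ and then invoking Theorem \ref{unk}, which states ${\cal DD}_n=\con(U_{n,2})$. So the entire task reduces to identifying the generating set ${\cal B}_+\cup{\cal B}_-$ with the generating set $U_{n,2}=\{uu^T\mid u\in{\cal U}_{n,2}\}$, up to taking conical hulls.

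First I would unpack the two generating sets explicitly. Recall $U_{n,2}$ consists of matrices $uu^T$ where $u\in\mathbb{R}^n$ has at most two nonzero entries, each equal to $\pm 1$. The vectors with exactly one nonzero are $\pm e_i$, and since $(\pm e_i)(\pm e_i)^T=e_ie_i^T$, these yield the diagonal rank-one matrices $e_ie_i^T$. The vectors with exactly two nonzeros (in positions $i<j$) are $\pm(e_i+e_j)$ and $\pm(e_i-e_j)$, giving the matrices $(e_i+e_j)(e_i+e_j)^T$ and $(e_i-e_j)(e_i-e_j)^T$ (the sign of $u$ drops out after forming $uu^T$). On the other side, by Definition \ref{df:sdbasis}, ${\cal B}_+$ contains $B^+_{i,j}=(e_i+e_j)(e_i+e_j)^T$ for $1\le i\le j\le n$, so when $i=j$ this gives $(2e_i)(2e_i)^T=4e_ie_i^T$, and when $i<j$ it gives $(e_i+e_j)(e_i+e_j)^T$; and ${\cal B}_-$ contains $(2e_i)(2e_i)^T=4e_ie_i^T$ together with $B^-_{i,j}=(e_i-e_j)(e_i-e_j)^T$ for $i<j$.

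The key observation is that the two sets generate the same cone because each generator of one is a nonnegative scalar multiple of a generator of the other. Explicitly, the diagonal matrices $e_ie_i^T$ appearing in $U_{n,2}$ equal $\tfrac14 B^+_{i,i}=\tfrac14 B^-_{i,i}$, and the off-diagonal generators $(e_i\pm e_j)(e_i\pm e_j)^T$ are literally shared between the two sets (as $B^+_{i,j}$ and $B^-_{i,j}$). Since the conical hull is insensitive to positive rescaling of generators, I would conclude $\con({\cal B}_+\cup{\cal B}_-)=\con(U_{n,2})$ by a two-way inclusion: every element of ${\cal B}_+\cup{\cal B}_-$ is a positive multiple of an element of $U_{n,2}$, hence lies in $\con(U_{n,2})$, and conversely every element of $U_{n,2}$ is a positive multiple of an element of ${\cal B}_+\cup{\cal B}_-$. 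Combining with Theorem \ref{unk} then gives the claim.

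I do not anticipate a genuine obstacle here, since the result is essentially a bookkeeping identification of two finite generating sets. The only point requiring mild care is the treatment of the diagonal generators: one must notice that the $i=j$ case of $B^+_{i,j}$ (and the $e_ie_i^T$ term of ${\cal B}_-$) produces $4e_ie_i^T$ rather than $e_ie_i^T$, so the match with $U_{n,2}$ holds only up to the positive factor $4$, which is harmless inside a conical hull. Equally, one should confirm that every vector in ${\cal U}_{n,2}$ with two nonzeros, after forming $uu^T$, collapses to exactly one of the four off-diagonal SD-basis matrices regardless of the signs of its entries, so that no generator of $U_{n,2}$ falls outside $\con({\cal B}_+\cup{\cal B}_-)$. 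Once these sign and scaling checks are made, the proposition is immediate.
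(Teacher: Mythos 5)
Your proposal is correct and follows essentially the same route as the paper: the paper likewise observes that ${\rm cone}({\cal B}_+\cup{\cal B}_-)={\rm cone}(U_{n,2})$ and then invokes Theorem \ref{unk}, treating the identification of the two generating sets as immediate, whereas you spell out the bookkeeping (the factor of $4$ on the diagonal generators and the sign-collapse in $uu^T$). The only cosmetic slip is your notation $B^-_{i,i}$, which Definition \ref{df:sdbasis} does not define (the diagonal members of ${\cal B}_-$ are the matrices $(e_i+e_i)(e_i+e_i)^T=4e_ie_i^T$); this does not affect the argument.
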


\subsection{Expansion of SD bases without losing sparsity}
\label{sec:7}

The previous section shows that we can obtain a sparse polyhedral approximation of $\mathcal{S}^n_+$ by using the SD bases. In this section, we try to extend the definition of the SD bases in order to obtain various sparse polyhedral approximations of $\mathcal{S}^n_+$.

\begin{definition}\label{df:exp}
Let $I=(e_1,\ldots,e_n)\in\mathbb{S}^n$ be the identity matrix. Define the expansion of the SD basis with one parameter $\alpha\in\mathbb{R}$ as
\begin{align*}
\bar{B}_{i,j}(\alpha)&:=( e_i+\alpha e_j)( e_i+\alpha e_j)^T,\\
\bar{\cal B}(\alpha)&:=\{\bar{B}_{i,j}(\alpha)\mid 1\leq i\leq j\leq n\}.
\end{align*}
\end{definition}

The proposition below ensures that the expansion of the SD bases also gives bases of $\mathbb{S}^n$.

\begin{proposition}\label{Independency}
Let $I=(e_1,\ldots,e_n)\in\mathbb{S}^n$ be the identity matrix. For any $\alpha\in\mathbb{R}\setminus\{0,-1\}$, $\bar{\cal B}(\alpha)$ is a set of $n(n+1)/2$ independent matrices and thus a basis of $\mathbb{S}^n$.
\end{proposition}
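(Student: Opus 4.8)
The plan is to prove linear independence of the $n(n+1)/2$ matrices $\bar{B}_{i,j}(\alpha)$ directly, since a set of $n(n+1)/2$ linearly independent elements in the $n(n+1)/2$-dimensional space $\mathbb{S}^n$ is automatically a basis. So the entire task reduces to showing that the only way to write $\sum_{1\leq i\leq j\leq n}c_{i,j}\bar{B}_{i,j}(\alpha)=O$ is with all $c_{i,j}=0$, under the hypothesis $\alpha\notin\{0,-1\}$.

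First I would expand a single matrix entrywise. For $i<j$ the rank-one matrix $\bar{B}_{i,j}(\alpha)=(e_i+\alpha e_j)(e_i+\alpha e_j)^T$ has nonzero entries only in positions $(i,i)$, $(j,j)$, and $(i,j)=(j,i)$, equal to $1$, $\alpha^2$, and $\alpha$ respectively; the diagonal matrix $\bar{B}_{i,i}(\alpha)=(1+\alpha)^2 e_ie_i^T$ contributes only to position $(i,i)$ with value $(1+\alpha)^2$. Writing the linear-combination-equals-zero condition entry by entry then gives two families of scalar equations. The off-diagonal equations, one for each pair $i<j$, read $\alpha\, c_{i,j}=0$; since $\alpha\neq0$ these immediately force $c_{i,j}=0$ for all $i<j$. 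The diagonal equations, one for each $k$, collect every term whose rank-one factor has a nonzero $k$th coordinate: namely $c_{k,k}(1+\alpha)^2 + \sum_{i<k}\alpha^2 c_{i,k} + \sum_{j>k}c_{k,j}=0$.

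Once the off-diagonal coefficients are known to vanish, the diagonal equations collapse to $c_{k,k}(1+\alpha)^2=0$ for each $k$. Here the hypothesis $\alpha\neq-1$ enters: it guarantees $(1+\alpha)^2\neq0$, so $c_{k,k}=0$ as well. This completes the argument that all coefficients are zero, establishing independence and hence that $\bar{\cal B}(\alpha)$ is a basis.

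I do not expect a serious obstacle here; the result is essentially bookkeeping, and the two excluded values of $\alpha$ correspond exactly to the two places where a coefficient could fail to be forced to zero ($\alpha=0$ kills the off-diagonal entry, $\alpha=-1$ kills the diagonal contribution of $\bar{B}_{i,i}$). The only point requiring mild care is getting the diagonal equation right, in particular correctly accounting for the asymmetric coefficients $\alpha^2$ versus $1$ coming from whether $k$ is the first or second index of the pair; but since these terms only matter after the off-diagonal coefficients have already been eliminated, the exact values are irrelevant to the conclusion. An alternative, slightly slicker framing would be to order the coefficients so that the coefficient matrix of the linear system is triangular (solve all off-diagonal $c_{i,j}$ first, then the diagonal $c_{k,k}$), making the nonvanishing of the relevant pivots $\alpha$ and $(1+\alpha)^2$ the whole content of the proof.
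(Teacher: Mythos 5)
Your proof is correct and follows essentially the same route as the paper's: set a linear combination to zero, read off that the off-diagonal coefficients vanish because $\alpha\neq 0$, and then that the diagonal coefficients vanish because $(1+\alpha)^2\neq 0$. The only cosmetic difference is that the paper expands each $\bar{B}_{i,j}(\alpha)$ in the basis ${\cal B}_+$ and invokes its linear independence, whereas you work entrywise in the standard coordinates of $\mathbb{S}^n$; the resulting system of equations is identical.
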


\begin{proof}

Let $\alpha\in\mathbb{R}\setminus\{0,-1\}$. Accordingly, for $1\leq i<j\leq n$, we have 
\begin{align}
\bar{B}_{i,j}(\alpha):=&( e_i+\alpha e_j)( e_i+\alpha e_j)^T\nonumber\\
=&e_ie_i^T+\alpha(e_ie_j^T+e_je_i^T)+\alpha^2e_je_j^T\nonumber\\
=&\alpha(e_ie_i^T+e_ie_j^T+e_je_i^T+e_je_j^T)+(1-\alpha)e_ie_i^T+(\alpha^2-\alpha)e_je_j^T\nonumber\\
=&\alpha{B}_{i,j}^++\frac{1-\alpha}{4}{B}_{i,i}^++\frac{\alpha(\alpha-1)}{4}{B}_{j,j}^+,\label{eq:bijexp}
\end{align}
and for every $1\leq i\leq n$, we also have
\begin{align}
\bar{B}_{i,i}(\alpha):=&( e_i+\alpha e_i)( e_i+\alpha e_i)^T\nonumber\\
=&(1+\alpha)^2 e_i e_i^T=\frac{(1+\alpha)^2}{4}{B}_{i,i}^+. \label{eq:bijexp2}
\end{align}

Suppose that there exist $\gamma_{i,j}\ge0\ (1\leq i\leq j\leq n)$ such that
\begin{align*}
\sum_{1\leq i\leq j\leq n}\gamma_{i,j}\bar{B}_{i,j}(\alpha)=O.
\end{align*}

Then, by (\ref{eq:bijexp}) and (\ref{eq:bijexp2}), we see that
\begin{align}
O=&\sum_{i=1}^n \frac{\gamma_{i,i}(1+\alpha)^2}{4}{B}_{i,i}^++\sum_{1\leq i<j\leq n}\gamma_{i,j}\left[\alpha{B}_{i,j}^++\frac{1-\alpha}{4}{B}_{i,i}^++\frac{\alpha(\alpha-1)}{4}{B}_{j,j}^+\right]\nonumber\\
=&\sum_{i=1}^n \frac{(1+\alpha)^2}{4}\gamma_{i,i}{B}_{i,i}^++\sum_{1\leq i<j\leq n}\alpha\gamma_{i,j}{B}_{i,j}^++\sum_{i=1}^{n-1} \frac{1-\alpha}{4}(\sum_{j=i+1}^n\gamma_{i,j}){B}_{i,i}^+\nonumber\\
&+\sum_{j=2}^n\frac{\alpha(\alpha-1)}{4}(\sum_{i=1}^{j-1}\gamma_{i,j}){B}_{j,j}^+\nonumber\\
=&\left[\frac{\gamma_{1,1}(1+\alpha)^2}{4}+\frac{1-\alpha}{4}(\sum_{j=2}^n\gamma_{1,j})\right]{B}_{1,1}^+\nonumber\\
&+\sum_{i=2}^{n-1}\left[ \frac{(1+\alpha)^2}{4}\gamma_{i,i}+\frac{1-\alpha}{4}(\sum_{j=i+1}^n\gamma_{i,j})+\frac{\alpha(\alpha-1)}{4}(\sum_{j=1}^{i-1}\gamma_{j,i})\right] {B}_{i,i}^+ \nonumber\\
&+\left[\frac{\gamma_{n,n}(1+\alpha)^2}{4}+\frac{\alpha(\alpha-1)}{4}(\sum_{j=1}^{n-1}\gamma_{j,n})\right]{B}_{n,n}^+\nonumber\\
&+\sum_{1\leq i<j\leq n}\alpha\gamma_{i,j}{B}_{i,j}^+ \label{eq:ind}.
\end{align}

Since $\{B_{i,j}^+\}={\cal B}_+$ is a set of linearly independent matrices, all the coefficients for ${B}_{i,j}$ in (\ref{eq:ind}) should be $0$. Thus, we have
\begin{align}
&0=\frac{\gamma_{1,1}(1+\alpha)^2}{4}+\frac{1-\alpha}{4}(\sum_{j=2}^n\gamma_{1,j}),\label{r1}\\
&0= \frac{(1+\alpha)^2}{4}\gamma_{i,i}+\frac{1-\alpha}{4}(\sum_{j=i+1}^n\gamma_{i,j})+\frac{\alpha(\alpha-1)}{4}(\sum_{j=1}^{i-1}\gamma_{j,i})\ (2\leq i\leq n-1),\label{r2}\\
&0=\frac{\gamma_{n,n}(1+\alpha)^2}{4}+\frac{\alpha(\alpha-1)}{4}(\sum_{j=1}^{n-1}\gamma_{j,n}),\label{r3}\\
&0=\alpha\gamma_{i,j} \ (1\leq i<j\leq n)\label{rij}.
\end{align}

Since $\alpha\neq0$, by (\ref{rij}) we have 
\begin{align}
\gamma_{i,j}=0 \ (1\leq i<j\leq n).\label{r4}
\end{align}

Since $\alpha\neq-1$, (\ref{r1})-(\ref{r4}) imply that 
\begin{align}
\gamma_{i,i}=0 \ (i=1,2,\ldots,n).\label{r5}
\end{align}

The above leads us to conclude that $\{\bar B_{i,j}(\alpha)\}=\bar{\cal B}(\alpha)$ is a set of $n(n+1)/2$ linearly independent matrices.\ $\square$
\end{proof}

If we let $\alpha=1$, then it is straightforward that $\bar{\cal B}(1)={\cal B}_+$. If we let $\alpha$ be other real numbers, we may obtain different SD bases. The following proposition gives the condition for generating different expanded SD bases. 

\begin{proposition}\label{pro:3.3.3}
Let $I=(e_1,\ldots,e_n)\in\mathbb{S}^n$ be the identity matrix. Suppose that $\alpha_1\in\mathbb{R}\setminus\{0,-1\}$ and $\alpha_2\in\mathbb{R}\setminus\{0,\alpha_1\}$. Then, for every $1\leq i<j\leq n$,
\begin{align*}
( e_i+\alpha_2 e_j)( e_i+\alpha_2 e_j)^T\notin{\rm cone}(\bar{\cal B}(\alpha_1)).
\end{align*}
\end{proposition}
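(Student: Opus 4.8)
The plan is to show that $\bar{B}_{i,j}(\alpha_2):=(e_i+\alpha_2 e_j)(e_i+\alpha_2 e_j)^T$ cannot be written as a nonnegative combination of the matrices in $\bar{\cal B}(\alpha_1)$, by exploiting the sparsity pattern of the matrices involved. First I would observe, using the expansion (\ref{eq:bijexp}) and (\ref{eq:bijexp2}) rewritten in the coordinate basis $\{e_ke_k^T, e_ke_\ell^T+e_\ell e_k^T\}$, that each $\bar{B}_{k,\ell}(\alpha_1)$ with $k<\ell$ has nonzero entries only in positions $(k,k)$, $(\ell,\ell)$, $(k,\ell)$, and $(\ell,k)$, while each diagonal generator $\bar{B}_{k,k}(\alpha_1)$ is supported only at $(k,k)$. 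The target matrix $\bar{B}_{i,j}(\alpha_2)$ has support exactly at $(i,i),(j,j),(i,j),(j,i)$, with entries $1$, $\alpha_2^2$, and $\alpha_2$ respectively.

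The key step is to track which generators can contribute a nonzero $(i,j)$ off-diagonal entry. Suppose $\bar{B}_{i,j}(\alpha_2)=\sum_{1\le k\le \ell\le n}\gamma_{k,\ell}\bar{B}_{k,\ell}(\alpha_1)$ with all $\gamma_{k,\ell}\ge 0$. The only generator in $\bar{\cal B}(\alpha_1)$ that places a nonzero entry in the off-diagonal slot $(i,j)$ is $\bar{B}_{i,j}(\alpha_1)$ itself, whose $(i,j)$-entry is $\alpha_1$. Comparing $(i,j)$-entries forces $\gamma_{i,j}\alpha_1=\alpha_2$, hence $\gamma_{i,j}=\alpha_2/\alpha_1$ (well-defined since $\alpha_1\neq 0$). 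Moreover, for any index $r\notin\{i,j\}$, the target has zero entries in row/column $r$, so no generator touching coordinate $r$ off-diagonally can appear; this means the only generators with a possibly nonzero coefficient are $\bar{B}_{i,j}(\alpha_1)$, $\bar{B}_{i,i}(\alpha_1)$, and $\bar{B}_{j,j}(\alpha_1)$. I would then reduce to matching the three remaining entries, namely the diagonal pair $(i,i),(j,j)$ and the off-diagonal $(i,j)$, using these three generators.

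With $\gamma_{i,j}=\alpha_2/\alpha_1$ fixed, I would read off the two diagonal equations. Using $\bar{B}_{i,j}(\alpha_1)=\alpha_1 B^+_{i,j}+\frac{1-\alpha_1}{4}B^+_{i,i}+\frac{\alpha_1(\alpha_1-1)}{4}B^+_{j,j}$ and $\bar{B}_{k,k}(\alpha_1)=\frac{(1+\alpha_1)^2}{4}B^+_{k,k}$, matching the $(i,i)$-entry gives a linear relation determining $\gamma_{i,i}$ and matching the $(j,j)$-entry determines $\gamma_{j,j}$. The crucial point is the sign: the target $(i,i)$-entry is $1$ and $(j,j)$-entry is $\alpha_2^2$, and once $\gamma_{i,j}=\alpha_2/\alpha_1>$ or $<0$ is substituted, the required $\gamma_{i,i}$ or $\gamma_{j,j}$ will fail to be nonnegative precisely when $\alpha_2\neq\alpha_1$. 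I expect the main obstacle to be the careful case analysis on the signs and on the special values: one must verify that for all $\alpha_2\in\mathbb{R}\setminus\{0,\alpha_1\}$ (with $\alpha_1\in\mathbb{R}\setminus\{0,-1\}$, so the coefficients $\frac{(1+\alpha_1)^2}{4}\neq 0$ are well-behaved) at least one of $\gamma_{i,i},\gamma_{j,j}$ is forced strictly negative, contradicting $\gamma_{k,\ell}\ge 0$. I would handle this by solving the two diagonal equations explicitly for $\gamma_{i,i}$ and $\gamma_{j,j}$ as functions of $\alpha_1,\alpha_2$ and showing their product (or an appropriate discriminant-type expression) is negative unless $\alpha_2=\alpha_1$, thereby completing the proof by contradiction.
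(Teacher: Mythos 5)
Your proposal is correct and follows essentially the same route as the paper: both identify the unique representation $\bar{B}_{i,j}(\alpha_2)=\frac{\alpha_1-\alpha_2}{\alpha_1(1+\alpha_1)^2}\bar{B}_{i,i}(\alpha_1)+\frac{\alpha_2(\alpha_2-\alpha_1)}{(1+\alpha_1)^2}\bar{B}_{j,j}(\alpha_1)+\frac{\alpha_2}{\alpha_1}\bar{B}_{i,j}(\alpha_1)$ and show that these three coefficients cannot all be nonnegative when $\alpha_2\neq\alpha_1$. The only cosmetic differences are that the paper obtains uniqueness from the linear independence of $\bar{\cal B}(\alpha_1)$ (Proposition \ref{Independency}) instead of your entrywise support argument, and it finishes with a two-case sign analysis on $\alpha_1,\alpha_2$ rather than your product computation $\gamma_{i,i}\gamma_{j,j}=-(\alpha_1-\alpha_2)^2\,\frac{\alpha_2}{\alpha_1(1+\alpha_1)^4}<0$.
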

\begin{proof}

For $1\leq i\leq j\leq n$, let us define
\begin{align*}
\bar{B}_{i,j}^1:=( e_i+\alpha_1 e_j)( e_i+\alpha_1 e_j)^T,\ \ \bar{B}_{i,j}^2:=( e_i+\alpha_2 e_j)( e_i+\alpha_2 e_j)^T.
\end{align*}

Note that if $i=j$, then 
\begin{align}\label{eq:Bii}
\bar{B}_{i,i}^1:=(1+\alpha_1)^2e_ie_i^T,\ \ \bar{B}_{i,i}^2:=(1+\alpha_2)^2e_ie_i^T.
\end{align}

For every $i< j$, we can write $\bar{B}_{i,j}^2$ as a linear combination of $\bar{B}_{i,j}^1$:
\begin{align}
\bar{B}_{i,j}^2=&e_ie_i^T+\alpha_2^2e_je_j^T+\alpha_2(e_ie_j^T+e_je_i^T)\nonumber\\
=&e_ie_i^T+\alpha_2^2e_je_j^T+\frac{\alpha_2}{\alpha_1}\alpha_1(e_ie_j^T+e_je_i^T)\ \ \ ({\rm because\ }\alpha_1\neq0)\nonumber\\
=&e_ie_i^T+\alpha_2^2e_je_j^T-\frac{\alpha_2}{\alpha_1}e_ie_i^T-\frac{\alpha_2\alpha_1^2}{\alpha_1}e_je_j^T\nonumber\\
&+\frac{\alpha_2}{\alpha_1}\left[e_ie_i^T+\alpha_1(e_ie_j^T+e_je_i^T)+\alpha_1^2e_je_j^T\right]\nonumber\\
=&\frac{\alpha_1-\alpha_2}{\alpha_1}e_ie_i^T+\alpha_2(\alpha_2-\alpha_1)e_je_j^T+\frac{\alpha_2}{\alpha_1}\bar{B}_{i,j}^1\nonumber\\
=&\frac{\alpha_1-\alpha_2}{\alpha_1(1+\alpha_1)^2}(1+\alpha_1)^2e_ie_i^T+\frac{\alpha_2(\alpha_2-\alpha_1)}{(1+\alpha_1)^2}(1+\alpha_1)^2e_je_j^T+\frac{\alpha_2}{\alpha_1}\bar{B}_{i,j}^1\nonumber\\
& ({\rm because\ }\alpha_1\neq-1)\nonumber\\
=&\frac{\alpha_1-\alpha_2}{\alpha_1(1+\alpha_1)^2}\bar{B}_{i,i}^1+\frac{\alpha_2(\alpha_2-\alpha_1)}{(1+\alpha_1)^2}\bar{B}_{j,j}^1+\frac{\alpha_2}{\alpha_1}\bar{B}_{i,j}^1 \ ({\rm by\ }(\ref{eq:Bii})).\label{eq:bij11}
\end{align}

Since $\alpha_1 \not\in \{0, -1\}$, Proposition \ref{Independency} ensures that $\mathcal{\bar{B}}(\alpha_1)$ is linearly independent, and hence, the expression (\ref{eq:bij11}) for ${\bar{B}}_{i,j}^2$ is unique.

Suppose that $\bar{B}_{i,j}^2\in{\rm cone}\left(\bar{\cal B}(\alpha_1)\right)$. In this case, all the coefficients in (\ref{eq:bij11}) should be nonnegative, which implies that 
\begin{align}
\frac{\alpha_1-\alpha_2}{\alpha_1(1+\alpha_1)^2}\ge0,\ \frac{\alpha_2(\alpha_2-\alpha_1)}{(1+\alpha_1)^2}\ge0,\ \frac{\alpha_2}{\alpha_1}>0.\label{eq:con1}
\end{align}

From the last inequality in (\ref{eq:con1}), we have either 
\begin{align*}
{\rm (i)}\ \alpha_1,\alpha_2>0 \hspace{5mm} {\rm or}\hspace{5mm} {\rm (ii)}\ \alpha_1,\alpha_2<0.
\end{align*}
For case (i), from the first and second inequalities of (\ref{eq:con1}), we have $\alpha_2-\alpha_1\ge0$ and $\alpha_1-\alpha_2\ge0$, which implies $\alpha_2= \alpha_1$ and contradicts the assumption $\alpha_2 \neq\alpha_1$. A similar contradiction is obtained for case (ii). Thus, we have $\bar{B}_{i,j}^2\notin{\rm cone}(\bar{\cal B}(\alpha_1))$.\ $\square$
\end{proof}

\subsection{Expression of ${\cal SDD}_n$ with expanded SD bases}
\label{sec:8}

As we have seen in Corollary \ref{convexSDDn}, the set ${\cal SDD}_n={\cal FW}(2)$ is a convex cone. 
This fact ensures that as a corollary of Theorem \ref{unk}, the conical hull of the union of the extended SD bases $\bar{\cal B}(\alpha)$ on $\alpha \in\mathbb{R}$ coincides with ${\cal FW}(2)$ and hence, the set of scaled diagonally dominant matrices ${\cal SDD}_n$:

\begin{corollary}\label{pro:sdd}
\begin{align*}
{\rm cone}\left(\displaystyle\bigcup_{{\alpha}\in\mathbb{R}}\bar{\cal B}(\alpha)\right)={\cal SDD}_n.
\end{align*}
\end{corollary}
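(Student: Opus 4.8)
The plan is to establish the two inclusions separately, reducing everything to the factor-width description of ${\cal SDD}_n$ supplied by Lemma \ref{factorSDDn} together with the convexity of the cone from Corollary \ref{convexSDDn}. The starting observation is that, directly from Definition \ref{facwid}, a matrix lies in ${\cal FW}(2)$ if and only if it can be written as $\sum_k v_k v_k^T$ with each $v_k\in\mathbb{R}^n$ supported on at most two coordinates; equivalently, ${\cal FW}(2)$ is exactly the conical hull of the rank-one matrices $vv^T$ for which $v$ has at most two nonzero entries. Since Lemma \ref{factorSDDn} gives ${\cal FW}(2)={\cal SDD}_n$, it suffices to show that ${\rm cone}\left(\bigcup_{\alpha\in\mathbb{R}}\bar{\cal B}(\alpha)\right)$ coincides with this conical hull of sparsely supported rank-one matrices.

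For the inclusion $\subseteq$, I would note that every generator $\bar B_{i,j}(\alpha)=(e_i+\alpha e_j)(e_i+\alpha e_j)^T$ is itself a rank-one matrix $vv^T$ with $v=e_i+\alpha e_j$ supported on at most two coordinates, hence $\bar B_{i,j}(\alpha)\in{\cal FW}(2)$. Because ${\cal FW}(2)={\cal SDD}_n$ is a convex cone by Corollary \ref{convexSDDn}, any nonnegative combination of such generators again lies in ${\cal SDD}_n$, giving ${\rm cone}(\bigcup_\alpha\bar{\cal B}(\alpha))\subseteq{\cal SDD}_n$.

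For the reverse inclusion $\supseteq$, the key is that every sparsely supported rank-one generator of ${\cal FW}(2)$ is a nonnegative multiple of a single expanded basis matrix. Given $v=a e_p+b e_q$ with $p<q$ and $a\neq 0$, I would factor $v=a\,(e_p+(b/a)e_q)$, so that $vv^T=a^2\,\bar B_{p,q}(b/a)$, a positive multiple of an element of $\bar{\cal B}(b/a)$. The degenerate cases are handled by the same device: if $a=0$ then $v=b e_q$ and $vv^T=b^2 e_q e_q^T=b^2\,\bar B_{q,q}(0)$, and a vector supported on a single coordinate is treated identically. Summing these expressions over the columns of a factorization $X=\sum_k v_k v_k^T$ of an arbitrary $X\in{\cal FW}(2)$ exhibits $X$ as a nonnegative combination of expanded basis matrices, yielding ${\cal SDD}_n={\cal FW}(2)\subseteq{\rm cone}(\bigcup_\alpha\bar{\cal B}(\alpha))$ and completing the proof.

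The only real subtlety, and the step I would treat most carefully, is this reverse inclusion: one must check that the scalar produced in the factorization is genuinely nonnegative (here it is automatic, since it equals $a^2$), and recognize that allowing $\alpha$ to range over all of $\mathbb{R}$, including negative values, is precisely what is needed to realize arbitrary entry ratios $b/a$, in contrast to the $\pm1$ generators of Theorem \ref{unk} that only produce ${\cal DD}_n$. One should also confirm that the convention $i\le j$ in Definition \ref{df:exp} is compatible with relabeling the two support coordinates so that the smaller index carries the leading entry. These points are routine once the factorization is set up, so no genuinely hard estimate arises.
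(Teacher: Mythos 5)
Your proposal is correct and follows essentially the same route as the paper, which likewise derives the corollary from the identification ${\cal SDD}_n={\cal FW}(2)$ (Lemma \ref{factorSDDn}, Corollary \ref{convexSDDn}) together with the observation that ${\cal FW}(2)$ is the conical hull of rank-one matrices $vv^T$ with $v$ supported on at most two coordinates, each of which is a nonnegative multiple of some $\bar B_{i,j}(\alpha)$. The paper states this only as a one-line corollary, so your write-up simply supplies the routine factorization details that the paper leaves implicit.
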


\subsection{Notes on the parameter $\alpha$}
\label{sec:alpha}
Here, we discuss the choice for the parameter $\alpha$ to increase the ``volume'' of the polyhedral approximation ${\rm cone}(\bar{\cal B}(\alpha))$ of the semidefinite cone ${\cal S}^n_+$. For any $\alpha\in\mathbb{R}$ and $1\leq i< j\leq n$, by Definition \ref{df:exp}, we can calculate the Frobenius norm of $\bar{B}_{i,j}(\alpha)$:
\begin{align}
\|\bar{B}_{i,j}(\alpha)\|=&\|( e_i+\alpha e_j)( e_i+\alpha e_j)^T\|\nonumber\\
=&\sqrt{{\rm Trace}\left(( e_i+\alpha e_j)( e_i+\alpha e_j)^T( e_i+\alpha e_j)( e_i+\alpha e_j)^T\right)}\nonumber\\
=&\| e_i+\alpha e_j\|^2\nonumber\\
=&1+\alpha^2.\label{bijnorm}
\end{align}

According to Proposition \ref{pro:3.3.3}, by changing $\alpha$, one can obtain different polyhedral approximations. However, we can see that
\begin{align*}
\lim_{|\alpha|\rightarrow\infty}\frac{\bar{B}_{i,j}(\alpha)}{\|\bar{B}_{i,j}(\alpha)\|}&=\lim_{|\alpha|\rightarrow\infty}\frac{1}{1+\alpha^2}(e_i+\alpha e_j)(e_i+\alpha e_j)^T\ (\text{by (\ref{bijnorm})}),\\
&=\lim_{|\alpha|\rightarrow\infty}\left[\frac{1}{1+\alpha^2}e_ie_i^T+\frac{\alpha}{1+\alpha^2}(e_ie_j^T+e_je_i^T)+\frac{\alpha^2}{1+\alpha^2}e_je_j^T\right]\\
&=e_je_j^T= \frac{1}{4}{B}^+_{j,j},
\end{align*}
and by Definitions \ref{df:sdbasis} and \ref{df:exp}, we have 
\begin{align*}
\bar{B}_{i,j}(0)=\frac{1}{4}{B}^+_{i,i},\ \bar{ B}_{i,j}(1)={B}^+_{i,j},\ \bar{B}_{i,j}(-1)={B}^-_{i,j}.
\end{align*}

This shows that, if $|\alpha|\rightarrow\infty$ or $\alpha\in\{0,1,-1\}$, the new matrix $\bar{B}_{i,j}(\alpha)$ will become close to the existing matrices, e.g. ${B}^+_{i,i}$, ${ B}^+_{j,j}$, ${ B}^+_{i,j}$ and ${ B}^-_{i,j}$, and the ``volume'' of the polyhedral approximation ${\rm cone}(\bar{\cal B}(\alpha)\cup{\cal B}_+\cup{\cal B}_-)$ of the semidefinite cone ${\cal S}^n_+$ will also be close to the ``volume'' of the existing inner approximation ${\rm cone}({\cal B}_+\cup{\cal B}_-)$ of ${\cal S}^n_+$.

To give an illustrative explanation of  the above discussion, here we consider the specific case  
\[
{\cal S}^2_+ =\left\{ \left( \begin{array}{ccc}  a & c  \\  c & b  \end{array}  \right) \mid a,b,c\in\mathbb{R}, a,b\ge0,\ ab-c^2\ge0 \right\}
\]
and draw some figures in $\mathbb{R}^3$ with coordinate $a, b$ and $c$. Fig. \ref{fig:angle} [a] shows the set of ${\cal S}^2_+$ in $\mathbb{R}^3$. The red arrow in Fig. \ref{fig:angle} [b] shows the extreme rays $\{\gamma\bar{B}_{i,j}(\alpha)\mid\gamma\ge0\}$ with $|\alpha|\rightarrow\infty$ and $\alpha\in\{0,1,-1\}$. The conical hull of these extreme rays is ${\rm cone}({\cal B}_+\cup{\cal B}_-)$ and its cross section with $\{X\in\mathbb{S}^2\mid\langle X,I\rangle=1\}$ is illustrated as the blue area. To avoid generating a new matrix $\bar{B}_{i,j}(\alpha)$ that is close to the existing matrices, we should choose an $\alpha$ such that the angle between $\bar{B}_{i,j}(\alpha)$ and existing matrices are equal, as illustrated in Fig. \ref{fig:angle} [c]. 

\begin{figure}[htbp]
  \begin{center}
    \begin{tabular}{c}

      \begin{minipage}{0.33\hsize}
        \begin{center}
          \includegraphics[clip, width=4cm]{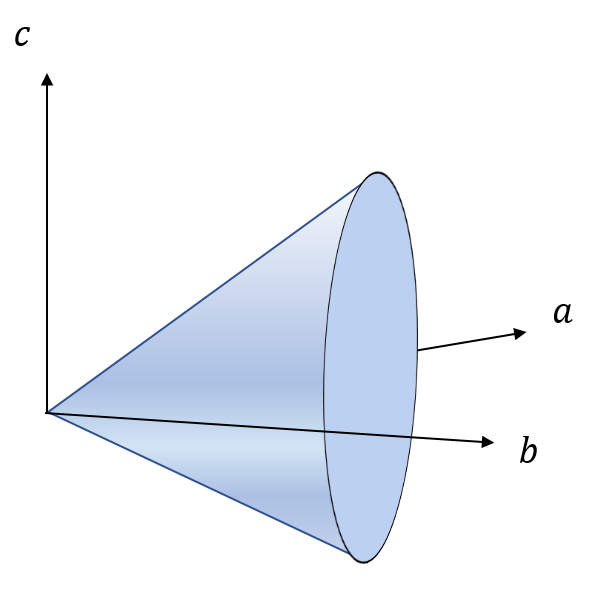}
          \hspace{1.6cm} [a] The set of ${\cal S}^2_+$.
        \end{center}
      \end{minipage}

      \begin{minipage}{0.33\hsize}
        \begin{center}
          \includegraphics[clip, width=4cm]{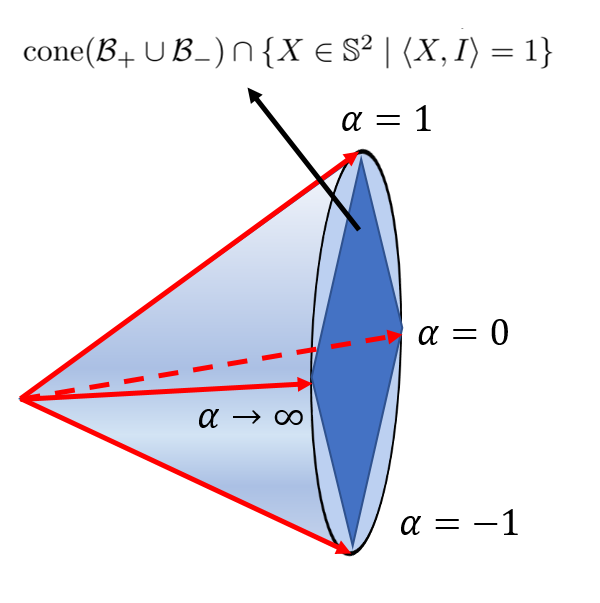}
          \hspace{1.6cm} [b] The set of $\{\gamma\bar{B}_{i,j}(\alpha)\mid\gamma\ge0\}$
        \end{center}
      \end{minipage}
     
      \begin{minipage}{0.33\hsize}
        \begin{center}
          \includegraphics[clip, width=4cm]{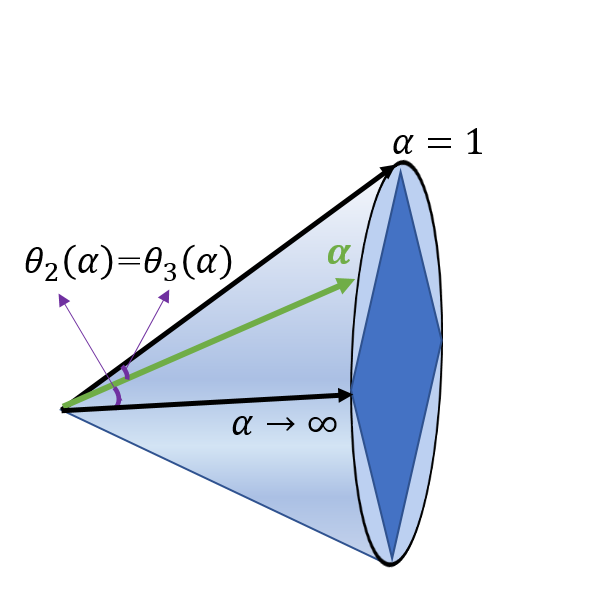}
          \hspace{1.6cm} [c] $\bar{B}_{i,j}(\alpha)$ that are not close to the existing matrices
        \end{center}
      \end{minipage}

    \end{tabular}
    \caption{Choice of $\alpha$ to generate $\bar{B}_{i,j}(\alpha)\in\mathbb{S}^2$ in $\mathbb{R}^3$}
    \label{fig:angle}
  \end{center}
\end{figure}

We expand this idea to the case of generating a matrix $\bar{B}_{i,j}(\alpha)\in\mathbb{S}^n$. 
Given an $\alpha\in\mathbb{R}$, we can define the angles between matrices in the expanded SD bases and SD bases Type I and II for every $1\leq i < j\leq n$, as follows:
\begin{align*}
\theta_1(\alpha):={\rm arccos}\frac{\langle\bar{B}_{i,j}(\alpha) , {B}^+_{i,i}\rangle}{\|\bar{B}_{i,j}(\alpha)\|\|{B}^+_{i,i}\|},\ \theta_2(\alpha):={\rm arccos}\frac{\langle\bar{B}_{i,j}(\alpha) , {B}^+_{j,j}\rangle}{\|\bar{B}_{i,j}(\alpha)\|\|{B}^+_{j,j}\|},\\
\theta_3(\alpha):={\rm arccos}\frac{\langle\bar{B}_{i,j}(\alpha) , {B}^+_{i,j}\rangle}{\|\bar{B}_{i,j}(\alpha)\|\|{B}^+_{i,j}\|},\ \theta_4(\alpha):={\rm arccos}\frac{\langle\bar{B}_{i,j}(\alpha) ,{B}^-_{i,j}\rangle}{\|\bar{B}_{i,j}(\alpha)\|\|{B}^-_{i,j}\|}.
\end{align*}

Thus, we have
\begin{align*}
{\rm cos}\theta_1(\alpha)=&\frac{\langle\bar{B}_{i,j}(\alpha) , {B}^+_{i,i}\rangle}{\|\bar{B}_{i,j}(\alpha)\|\|{B}^+_{i,i}\|}\\
=&\frac{\langle(e_i+\alpha e_j)(e_i+\alpha e_j)^T , (e_i+e_i)(e_i+e_i)^T\rangle}{(1+\alpha^2)\|(e_i+e_i)(e_i+e_i)^T\|}\ (\text{by }(\ref{bijnorm}) )\\
=&\frac{4\|e_i\|^4}{(1+\alpha^2)4\|e_i\|^2}\ (\text{because }e_i^Te_j=0)\\
=&\frac{1}{1+\alpha^2}\ (\text{because }\|e_i\|=1).
\end{align*}
Similarly, we have
\begin{align*}
{\rm cos}\theta_2(\alpha)=\frac{\alpha^2}{1+\alpha^2},\ {\rm cos}\theta_3(\alpha)=\frac{(1+\alpha)^2}{2(1+\alpha^2)},\ {\rm cos}\theta_4(\alpha)=\frac{(1-\alpha)^2}{2(1+\alpha^2)}.
\end{align*}

In general, to obtain a large enough inner approximation with limited parameters, we prefer an $\alpha$ that makes $\theta_1(\alpha)=\theta_3(\alpha)$, which means that the new matrix $\bar{B}_{i,j}(\alpha)$ will be in the middle of ${B}^+_{i,i}$ and ${B}^+_{i,j}$ on the boundary of ${\cal S}^n_+$. Similarly, we can obtain $\alpha$ by calculating $\theta_2(\alpha)=\theta_3(\alpha)$, $\theta_1(\alpha)=\theta_4(\alpha)$ and $\theta_2(\alpha)=\theta_4(\alpha)$. By solving these equalities, we find that 
\begin{align*}
\alpha=\pm1\pm\sqrt2.
\end{align*}
The expansions with these parameters are expected to provide generally large inner approximations for ${\cal S}^n_+$.

\section{Cutting plane methods for the maximum stable set problem}
\label{sec:9}
Conic optimization problems, including SDPs and copositive programs, have been shown to provide tight bounds for NP-hard combinatorial and noconvex optimization problems. Here, we consider applying approximations of ${\cal S}^n_+$ to one of those NP-hard problems, the maximum stable set problem. A stable set of a graph $G(V,E)$ is a set of vertices in $V$, such that there is no edge connecting any pair of vertices in the set. The maximum stable set problem aims to find the stability number, i.e. the number of vertices of the largest stable set of $G$, namely $\alpha(G)$. 

De Klerk and Pasechnik \cite{de2002apprimation} proposed a copositive programming formulation to obtain the exact stability number of a graph $G$ with $n$ vertices:
\begin{align}\label{MSSP}
\alpha(G)=\max&\ \langle ee^T,X\rangle\nonumber\\
{\rm s.t.}&\ \langle A+I,X\rangle=1,\\
&X\in{\cal C}^*_n,\nonumber
\end{align}
where $e$ is the all-ones vector, $A$ is the adjacency matrix of graph $G$, and ${\cal C}^*_n$ is the dual cone of the copositive cone ${\cal C}_n:=\{X\in\mathbb{S}^n\mid d^TXd\ge0 \ \forall d\in\mathbb{R}^n,\ d\ge0\}$.

Although problem (\ref{MSSP}) is a conic optimization problem, it is still difficult since determining whether $X \in {\cal C}^*_n$ or not is NP-hard \cite{dickinson2014computational}. A natural approach is to relax this problem to a  more tractable optimization problem. From the definition of each cone, we can see the validity of the following inclusions:
\begin{eqnarray*}
{\cal C}^*_n\subseteq{\cal S}^n_+\cap{\cal N}^n\subseteq{\cal S}^n_+\subseteq{\cal S}^n_++{\cal N}^n\subseteq{\cal C}_n.
\end{eqnarray*}
By replacing ${\cal C}^*_n$ with ${\cal S}^n_+\cap{\cal N}^n$, one can obtain an SDP relaxation of (\ref{MSSP}):
\begin{align}\label{MSSP1}
\max&\ \langle ee^T,X\rangle\nonumber\\
{\rm s.t.}&\ \langle A+I,X\rangle=1,\\
&\ X\in{\cal S}^n_+\cap{\cal N}^n.\nonumber
\end{align}

Solving this SDP is not as easy as it seems to be; in fact, we could not obtain a useful result of (\ref{MSSP1}) after 6 hours of calculation using the state-of-the-art SDP solver Mosek for a random generalized problem when $n=300$. Combining the expanded SD bases with the cutting plane method, we apply the approximations of ${\cal S}^n_+$ to (\ref{MSSP1}) and solve it by calculating a series of more tractable problems.

Let ${\cal P}^n$ satisfy ${\cal S}^n_+\subseteq{\cal P}^n\subseteq\mathbb{S}^n$ and replace $X\in {\cal S}^n_+$ by $X\in{\cal P}^n$ in (\ref{MSSP1}). Then, we obtain a relaxation of (\ref{MSSP1}):
\begin{align}\label{MSSP2}
\max&\ \langle ee^T,X\rangle\nonumber\\
{\rm s.t.}&\ \langle A+I,X\rangle=1,\\
&\ X\in{\cal P}^n\cap{\cal N}^n.\nonumber
\end{align}

Usually, the relaxed problem (\ref{MSSP2}) is expected to be easier to solve and to give us a better upper bound of problem (\ref{MSSP1}) from its optimal solution $X^*$. 
To get a better upper bound, we select some eigenvectors with negative eigenvalues of an optimal solution $X^*$ of problem  (\ref{MSSP2}), say $d_1,..,d_k$, by adding cutting planes
\begin{align*}
\langle d_id_i^T,X\rangle\ge0\ \ \  ( i=1,..,k)
\end{align*}
to (\ref{MSSP2}), and obtain a new optimization problem
 \begin{align}\label{MSSP3}
\max&\ \langle ee^T,X\rangle\nonumber\\
{\rm s.t.}&\ \langle A+I,X\rangle=1,\\
&\ \langle d_id_i^T,X\rangle\ge0\  ( i=1,..,k)\nonumber\\
&\ X\in{\cal P}^n\cap{\cal N}^n.\nonumber
\end{align}

Notice that the optimal solution $X^*$ of problem (\ref{MSSP2}) is cut from the feasible region of problem   (\ref{MSSP3}) since $\langle d_id_i^T,X^*\rangle<0\  ( i=1,..,k)$. On the other hand, since ${\cal S}^n_+=\{X\in\mathbb{S}^n\mid \forall d\in\mathbb{R}^n, \ \langle dd^T,X\rangle\ge0 \} \subseteq {\cal P}^n$, every feasible solution of (\ref{MSSP1}) is feasible for (\ref{MSSP3}), and hence problem  (\ref{MSSP3}) is a relaxation of problem (\ref{MSSP1}). These facts ensure that problem (\ref{MSSP3}) is a tighter relaxation of problem (\ref{MSSP1}) than problem  (\ref{MSSP2}). By repeating this procedure, we are able to obtain a series of nonincreasing upper bounds of (\ref{MSSP1}). Since the eigenvectors are usually dense, we only have to add eigenvectors corresponding to up to the second smallest eigenvalues to $\{d_i\}$ at every iteration, which increases computational efficiency. 

As for the selection of the initial relaxation ${\cal P}^n$, we are ready to use the approximations of ${\cal S}^n_+$ based on the expanded SD bases. Let ${\cal H}:=\{\pm1,\pm1\pm\sqrt2\}$ be the set of parameters calculated in Section \ref{sec:alpha}, and let ${\cal SDB}_n$ denote the conical hull of expanded SD bases using ${\cal H}$:
\begin{align*}
{\cal SDB}_n:={\rm cone}\left(\displaystyle\bigcup_{{\alpha}\in{\cal H}}\bar{\cal B}(\alpha)\right).
\end{align*}
Then, as has been described in the previous sections, we have
\begin{eqnarray}\label{inclusion}
{\cal S}^n_+\subseteq{\cal SDD}^*_n\subseteq{\cal SDB}^*_n\subseteq{\cal DD}^*_n.
\end{eqnarray}

If ${\cal SDB}^*_n$ or ${\cal DD}^*_n$ is selected to be ${\cal P}_n$, the corresponding relaxed problem in the cutting plane procedure becomes an LP, which allows us to use powerful state-of-the-art LP solvers, such as Gurobi \cite{gurobi}. Ahmadi et. al. \cite{ahmadi2017optimization} showed that when ${\cal SDD}^*_n$ is selected, the relaxations turn out to be SOCPs. Although ${\cal SDD}^*_n$ provides a tighter relaxation than either ${\cal DD}_n$ or  ${\cal SDB}_n$, the latter two relaxations are expected to have a lower computational cost. In addition, in \cite{ahmadi2017optimization}, Ahmadi et al. also proposed an SOCP-based cutting plane approach, named SDSOS, which adds SOCP cuts at every iteration. We conducted experiments to compare the efficiencies of those cutting plane methods using different approximations and SDSOS. The specifications of the experimental methods are summarized in Table \ref{tab:methods description}.

\begin{table}[htb]
\label{tab:methods description}
\begin{center}
\caption{Specifications of the experimental methods}
\begin{tabular}{ccccc}
\hline
\multirow{2}{*}{Method} & \multicolumn{1}{c}{\multirow{2}{*}{${\cal P}^n$}} & \multicolumn{2}{c}{Number of cuts added at each iteration} & \multirow{2}{*}{Solver} \\
                              & \multicolumn{1}{c}{}                                              & LP cut              & SOCP cut &                         \\\hline
CPDD                      & ${\cal DD}^*_n$                                                     & 2                      & 0  & Gurobi                  \\
CPSDB                    & ${\cal SDB}^*_n$                                                & 2                       & 0  & Gurobi                  \\
CPSDD                    & ${\cal SDD}^*_n$                                                   & 2                       & 0 & Mosek                   \\
SDSOS                    & ${\cal SDD}^*_n$                                                   & 2                       & 1  & Mosek                  \\\hline
\end{tabular}
\end{center}
\end{table}

We tested these methods on the Erd$\ddot{\rm o}$s-R\'{e}nyi graphs $ER(n,p)$, randomly generated by Ahmadi et al. in \cite{ahmadi2017optimization}, where $n$ is the number of vertices and every pair of vertices has an edge with probability $p$. All experiments were performed with MATLAB 2018b on a Windows PC with an Intel(R) Core(TM) i7-6700 CPU running at 3.4 GHz and 16 GB of RAM. The LPs were solved using Gurobi Optimizer 8.0.0 \cite{gurobi} and the SOCPs and SDPs are solved using Mosek Optimizer 9.0 \cite{mosek}. 

Fig. \ref{fig:250_8_ite} shows the result for an instance with $n=250$ and $p=0.8$. The x-axis is the number of iterations, and the y-axis is the gap between the upper bounds of each method and the SDP bound obtained by (\ref{MSSP1}); the gap is computed by $\left|\frac{f^*-f_k}{f^*}\right|\times100\%$ for the obtained upper bound  $f_k$ at $k$'s iteration and the SDP bound $f^*$ obtained by solving (\ref{MSSP1}) directly.

As can be seen in this figure, the accuracy of CPDD is the worst among the four methods at each iteration. CPSDB achieves almost the same upper bounds as CPSDD and SDSOS, which shows that the proposed polyhedral approximation ${\cal SDB}_n$ is promising for obtaining a solution close to the non-polyhedral approximation ${\cal SDD}_n$ of ${\cal S}^n_+$. Although SDSOS adds an extra SOCP cut at every iteration and takes longer  to solve, the accuracy of SDSOS does not seem to be affected and is not so different from the accuracy of CPSDD at each iteration.

\begin{figure}[htbp]
\begin{center}
\includegraphics[width=11cm,clip]{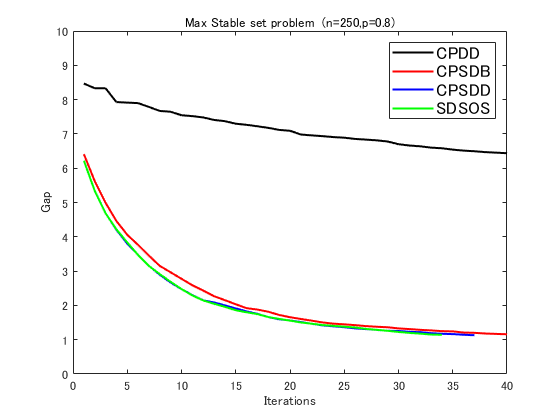}
\begin{minipage}{0.71\textwidth}
\caption{Relation between the number of iterations and the gap}
\label{fig:250_8_ite}
\end{minipage}
\end{center}
\end{figure}

Fig. \ref{fig:250_8} shows the relation between the computation time and the gap of each method for the same instance. Although its accuracy is not necessarily the best at every iteration, it seems that CPSDB is the most efficient method. CPSDB attains an upper bound whose gap is $2$ within $30$ s, while CPSDD and SDSOS  attain upper bounds whose gap is $4$ after the same amount of time. The difference might come from that the subproblems of CPSDB are sparse LPs at earlier iterations and the computations are relatively cheaper than those of CPSDD and SDSOS whose subproblems are SOCPs.

\begin{figure}[htbp]
\begin{center}
\includegraphics[width=11cm,clip]{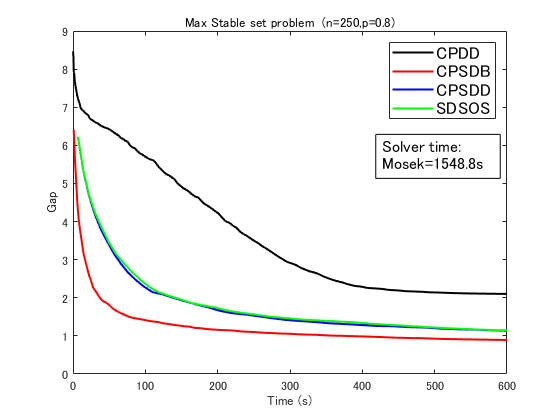}
\begin{minipage}{0.71\textwidth}
\caption{Relation between the computational time (s) and the gap}
\label{fig:250_8}
\end{minipage}
\end{center}
\end{figure}

Table \ref{Table:SDP} and \ref{Table:LP} give the bounds of iterative methods and the SDP bound for all the instances. In Table \ref{Table:SDP}, the CPSDD$_0$/SDSOS$_0$ column shows the first upper bound obtained by CPSDD and SDSOS, i.e., the upper bound obtained by solving the same SOCP before adding any cutting plane.
The (5 min) and (10 min) columns of CPSDD (SDSOS) show the upper bounds obtained after $5$ minutes and after $10$ minutes of the CPSDD (SDSOS) computation, respectively. The SDP column shows the SDP bound obtained by solving (\ref{MSSP1}). 

Similarly, in Table \ref{Table:LP}, the CPDD$_0$ and CPSDB$_0$ columns show the first upper bounds obtained by  CPDD and CPSDB, respectively, before adding any cutting plane.
The (5 min) and (10 min) columns of CPDD (CPSDB) show the upper bounds obtained after $5$ minutes and after $10$ minutes of the CPDD (CPSDB) computation, respectively. 

Note that we failed to solve SDPs (\ref{MSSP1}) for instances having $n=300$ nodes within our time limit $20000s$. In Table \ref{Table:SDP},  the Value and Time (s) columns of SDP with $n=300$ show the results obtained in \cite{ahmadi2017optimization} for these two instances, as a reference.

As can be seen in Table \ref{Table:SDP} and \ref{Table:LP}, for all instances, the values of CPSDD$_0$/SDSOS$_0$ are better than the values of CPSDB$_0$ and CPDD$_0$. These results correspond to the inclusion relationship of initial approximations (\ref{inclusion}). We can also see that the values of CPSDB$_0$ are almost the same as those of CPSDD$_0$/SDSOS$_0$ for all instances, while the values of CPDD$_0$ are much worse than others. For all instances, CPSDB seems to be significantly more efficient than all other methods. For example, for instance with $n=250$ and $p=0.3$, after $10$ min of calculation, CPSDB obtained an upper bound of $73.24$, while CPSDD and SDSOS got upper bounds greater than $90$ and CPDD got a bound of more than $146$. 

At present, solving a large SDP, e.g., one with more than $n=300$ nodes requires a significant amount of computational time. The cutting plane method CPSDB with our polyhedral approximation ${\cal SDB}_n$ is a promising way of obtaining efficient upper bounds of such large SDPs in a moderate time.

\begin{table}[]
\caption{Upper bounds obtained by SDP and SOCP methods on $ER(n,p)$ graphs}
\begin{tabular}{rrrrrrrrrr}
\hline
\multirow{2}{*}{n} & \multirow{2}{*}{p} & \multicolumn{2}{c}{CPSDD$_0$/SDSOS$_0$} & \multicolumn{2}{c}{CPSDD} & \multicolumn{2}{c}{SDSOS} & \multicolumn{2}{c}{SDP} \\
                   &                    & Value              & Time (s)            & (5 min)      & (10 min)     & (5 min)      & (10 min)     & Value     & Time (s)     \\ \hline
150                & 0.3                & 105.70             & 0.95               & 38.91       & 37.02       & 40.97       & 37.38       & 20.44     & 105.46      \\
150                & 0.8                & 31.78              & 1.00               & 10.07       & 9.66        & 9.70        & 9.31        & 6.00      & 110.63      \\
200                & 0.3                & 140.47             & 3.14               & 70.48       & 55.52       & 75.46       & 61.31       & 23.73     & 549.63      \\
200                & 0.8                & 40.92              & 3.14               & 12.10       & 11.29       & 12.17       & 11.38       & 6.45      & 497.55      \\
250                & 0.3                & 176.25             & 6.60               & 115.41      & 93.81       & 119.67      & 99.99       & 26.78     & 1562.52     \\
250                & 0.8                & 51.87              & 6.79               & 17.36       & 15.30       & 17.43       & 15.39       & 7.18      & 1553.63     \\
300                & 0.3                & 210.32             & 13.05              & 160.42      & 138.60      & 162.77      & 143.12      & (29.13)         & (32300.60)           \\
300                & 0.8                & 60.97              & 13.31              & 21.71       & 17.77       & 22.66       & 18.50       & (7.65)         & (20586.02)           \\ \hline
\end{tabular}
\label{Table:SDP}
\end{table}

\begin{table}[]
\caption{Upper bounds obtained by LP methods on the same $ER(n,p)$ graphs}
\begin{tabular}{rrrrrrrrrr}
\hline
\multirow{2}{*}{n} & \multirow{2}{*}{p} & \multicolumn{2}{c}{CPDD$_0$} & \multicolumn{2}{c}{CPDD} & \multicolumn{2}{c}{CPSDB$_0$} & \multicolumn{2}{c}{CPSDB} \\
                   &                    & Value        & Time (s)       & (5 min)     & (10 min)     & Value         & Time  (s)       & (5 min)      & (10 min)     \\ \hline
150                & 0.3                & 117          & 0.06          & 76.76      & 67.51       & 107.29        & 0.24          & 36.80       & 35.12       \\
150                & 0.8                & 46           & 0.05          & 13.70      & 12.71       & 32.76         & 0.28          & 9.51        & 9.06        \\
200                & 0.3                & 157          & 0.1           & 113.28     & 104.07      & 142.25        & 0.52          & 55.07       & 48.18       \\
200                & 0.8                & 54           & 0.11          & 17.39      & 16.07       & 42.14         & 0.57          & 11.58       & 11.00       \\
250                & 0.3                & 194          & 0.17          & 154.75     & 146.20      & 178.30        & 0.84          & 91.88       & 73.24       \\
250                & 0.8                & 68           & 0.17          & 28.02      & 22.26       & 53.22         & 1.00          & 14.76       & 13.57       \\
300                & 0.3                & 230          & 0.26          & 183.89     & 174.02      & 212.97        & 1.29          & 133.83      & 110.95      \\
300                & 0.8                & 78           & 0.24          & 47.87      & 32.28       & 62.47         & 1.36          & 18.11       & 16.05       \\ \hline
\end{tabular}
\label{Table:LP}
\end{table}

\section{Concluding remarks}
\label{sec:10}

We developed techniques to construct a series of sparse polyhedral approximations of the semidefinite cone. We provided a way to approximate the semidefinite cone by using SD bases and proved that the set of diagonally dominant matrices can be expressed with sparse SD bases. We proposed a simple expansion of SD bases that keeps the sparsity of the matrices that compose it. We gave the conditions for generating linearly independent matrices in expanded SD bases as well as for generating an expansion different from the existing one. We showed that the polyhedral approximation using our expanded SD bases contains the set of diagonally dominant matrices and is contained in the set of scaled diagonally dominant matrices. We also proved that the set of scaled diagonally dominant matrices can be expressed using an infinite number of expanded SD bases. 

The polyhedral approximations were applied to the cutting plane method for solving a semidefinite relaxation of the maximum stable set problem. The results of the numerical experiments showed that the method with our expanded SD bases is more efficient than other methods  (see Fig. \ref{fig:250_8}); improving  the efficiency of our method still remains an important study issue.

One future direction of study is to increase the number of vectors in the definition of the SD bases. The current SD bases are defined as a set of matrices $(e_i+e_j)(e_i+e_j)^T$. If we use three vectors, as in $(e_i+e_j+e_k)(e_i+e_j+e_k)^T$, we might obtain another inner approximation that remains relatively sparse when the dimension $n$ is large.

Another future direction is to focus on the factor width $k$ of a matrix. The cone of matrices with factor width at most $k=2$ was introduced in order to give another expression of the set $\mathcal{SDD}_n$ of scaled diagonally dominant matrices. By considering a larger width $k > 2$, we may obtain a larger inner approximation of the semidefinite cone $\mathcal{S}^n_+$, although it would not be polyhedral, or even characterized by using SOCP constraints. Finding efficient ways to solve approximation problems over such cones might be an interesting challenge.

Also, our expanded SD bases can be applied to some other difficult problems. Mixed integer nonlinear programming has recently become popular in many practical applications. In \cite{lubin2018polyhedral}, Lubin et al. proposed a cutting plane framework for mixed integer convex optimization problems. In \cite{kobayashibranch}, Kobayashi and Takano proposed a branch and bound cutting plane method for mixed integer SDPs. It would be interesting to see whether the approximations of ${\cal S}^n_+$ proposed in this paper could be used to improve the efficiency of those methods.

\section*{Acknowledgments}
 The authors would like to sincerely thank the anonymous reviewers for their thoughtful and valuable comments which have significantly improved the paper. Among others, one of the reviewers pointed out Remark \ref{PtoI} which helped the authors to simplify the presentation of the paper. This research was supported by the Japan Society for the Promotion of Science through a Grant-in-Aid for Challenging Exploratory Research (17K18946) and a Grant-in-Aid for Scientific Research ((B)19H02373) from the Ministry of Education, Culture, Sports, Science and Technology of Japan.


\end{document}